\documentclass[12pt,psamsfonts]{amsart}
\usepackage{graphicx, psfrag}




\usepackage{amsmath, amssymb, amsfonts, amsthm, latexsym}
\usepackage[latin1]{inputenc}
\usepackage{dsfont}
\usepackage{amscd}
\usepackage[all]{xy}

\newcommand{\PP}{\mathbb P}
\newcommand{\QQ}{\mathbb Q}
\newcommand{\RR}{\mathbb R}
\newcommand{\ZZ}{\mathbb Z}

\DeclareMathOperator{\im}{im}

\DeclareMathOperator{\Proj}{Proj} 
\DeclareMathOperator{\rank}{rank}

\DeclareMathOperator{\Syz}{Syz}

\newtheorem{proposition}{Proposition}[section]
\newtheorem{theorem}[proposition]{Theorem}
\newtheorem{lemma}[proposition]{Lemma}
\newtheorem{corollary}[proposition]{Corollary}
\newtheorem{conjecture}[proposition]{Conjecture}
\newtheorem{theoremintro}{Theorem}

\theoremstyle{definition}

\newtheorem{definitionintro}[theoremintro]{Definition}
\newtheorem*{acknowledgements}{Acknowledgements}
\newtheorem{definition}[proposition]{Definition}
\newtheorem{remark}[proposition]{Remark}
\newtheorem{example}[proposition]{Example}
\newtheorem{construction}[proposition]{Construction}
\newtheorem{notation}[proposition]{Notation}


\renewcommand{\O}       {\mathcal{O}} 

\newcommand  {\shS}     {\mathcal{S}}

\newcommand  {\lra}     {\longrightarrow}
\newcommand  {\ra}      {\rightarrow}
\newcommand{\comdots}{ , \ldots , }
\newcommand{\komdots}{ , \ldots , }
\newcommand{\plusdots}{ + \ldots + }

\newcommand  {\dual}    {\vee}


\newcommand{\NN}{\mathbb{N}}
\newcommand{\ainv}[1][R]{a(#1)} 



\setlength{\oddsidemargin}{1.5cm} \setlength{\evensidemargin}{1.5cm}

\setlength{\textwidth}{13.7cm} \setlength{\textheight}{22cm}

\setlength{\topmargin}{1cm} \setlength{\footskip}{1cm}

\setlength{\parindent}{0cm}

\setlength{\parskip}{1ex}

\newcommand{\test}{{u}}
\newcommand{\dip}{{d}} 
\newcommand{\nog}{{n}} 
\newcommand{\degg}{{a}} 

\newcommand{\firstsyzbig}{\underline{\Syz_1}}

\hyphenation{Fischbacher CoCoA}

\begin{document}

\title[Generic bounds for tight closure]
{Generic bounds for Frobenius closure and tight closure}

\author[Holger Brenner and Helena Fischbacher-Weitz]{Holger Brenner and Helena Fischbacher-Weitz}
\address{Fachbereich Mathematik/ Informatik, Universit\"at Osna\-br\"uck, 49069 Osna\-br\"uck, Germany}

\email{hbrenner@uni-osnabrueck.de, hfischba@uni-osnabrueck.de.}

\noindent Mathematical Subject Classification (2000): 13A35; 13D02; 13D40; 14F05; 14J60

\begin{abstract}
We use geometric and cohomological methods to show that given a degree bound
for membership in ideals of a fixed degree type in the polynomial
ring
$P=k[x_0, \ldots, x_{\dip}]$, one
obtains a good generic degree bound for membership in the tight
closure of an ideal of that degree type in any standard-graded
$k$-algebra
$R$ of dimension $\dip+1$. This indicates that the tight closure of an ideal behaves more
uniformly than the ideal itself. Moreover, if $R$ is normal, one
obtains a generic bound for membership in the Frobenius closure.
If $\dip \leq 2$, then
the bound for ideal membership in $P$ can be
computed from the known cases of the Fr\"oberg conjecture and yields
explicit generic tight closure bounds.
\end{abstract}

\maketitle


\section*{Introduction}

Let $P=k[x_0 \comdots x_{\dip}]$ be a standard-graded polynomial ring over a field
$k$ and let $\degg_1 \comdots \degg_\nog$ be natural numbers. For a
family $f_1 \comdots f_\nog$ of homogeneous polynomials of degree
$\deg(f_i)=\degg_i$ we look at the ideal $I=(f_1 \comdots f_\nog)$.
The Fr\"oberg conjecture, which has been proved in dimension $\dip \leq
2$, claims that the Hilbert function
$$m \mapsto H(m)= \dim_k\; (P/I)_m$$
has an easy description given by the coefficients of a certain power
series defined by the degrees $\degg_1 \comdots \degg_\nog$, provided that the
$f_i$ are choosen generically.  In particular, this conjecture gives
for $\nog \geq \dip +1$ an implicitly defined degree bound $m_0$ for
ideal membership (depending only on the degrees $\degg_1 \comdots
\degg_\nog$), by which we mean that $P_{\geq m_0} \subseteq (f_1 \comdots f_\nog)$. This degree bound is the smallest number where the predicted generic Hilbert function vanishes.

Now let $R$ be any standard-graded $k$-algebra of dimension $\dip +1$. Does there exist a similar generic degree bound for ideal membership? Already the parameter case ($\nog=\dip +1$) shows that
there is no such degree bound which is independent of $R$. In
contrast, we will show that if we look at the tight closure of
the ideal instead, then there does exist a generic degree bound,
depending only on the dimension and the degrees $\degg_i$. If $m_0$ is the generic degree bound in the polynomial ring, then $m_0 + \dip$ is a generic tight closure
bound for all standard-graded $k$-algebras of dimension $\dip+1$ over a field of positive characteristic (see Theorem~\ref{maintheoremintro} below and 
Theorem~\ref{maintheorem} in the main text). This means that the containment in the tight closure behaves more
uniformily than the containment in the ideal. This is in stark contrast to the often expressed opinion that tight closure is difficult to compute and behaves mysteriously.

We briefly recall the notion of tight closure. The theory of tight closure
has been developed by Hochster and Huneke
since 1986 (see \cite{hochsterhuneketightclosure, hochsterhunekebriancon,
hunekeapplication}). It assigns to every ideal $I$ in a Noetherian
ring containing a field an ideal $I^* \supseteq I$, the tight
closure of $I$. In positive characteristic, tight closure and the related notion
of Frobenius closure are defined as follows.
\begin{definitionintro}
Let $R$ be a Noetherian ring containing a field of characteristic
$p>0$, and let $I=(f_1 \comdots f_{\nog}) \subseteq R$ be an ideal.
Let
\[ \begin{array}{ll} I^{[q]} & := (f_1^q \comdots f_\nog^q) \subseteq
R \; \text{for} \; q=p^e,\; e \in \NN \\
I^F &:= \{ x \in R: x^q \in I^{[q]}\; \text{for some}\; q=p^e \}
 \\
I^* &:=\{ x \in R: \exists \test \notin \text{min. prime}: \test x^q \in
I^{[q]}\; \text{for almost all}\; q=p^e\} \end{array} \]
$I^F$ is called the \emph{Frobenius closure} of $I$ and $I^*$ is
called the \emph{tight closure} of $I$.
\end{definitionintro}

In a regular ring, such as for example a polynomial ring over a field, every ideal is tightly closed (i.e. $I=I^*$), and it is an important feature of tight closure theory that we can often generalize statements about ideal membership in regular rings to non-regular rings if we replace the ideal by its tight closure. A typical example is the tight closure version of the Brian\c
con-Skoda theorem \cite[Theorem~5.7]{hunekeapplication}. Our results strongly support this principle. Here the generic tight closure result follows from the regular case by cohomological vanishing conditions and by semicontinuity.

The search for degree bounds for tight closure is a classical topic (\cite{smithgraded}, \cite{hunekeparameter}, \cite{brennerslope}, \cite{brennerlookingstable}, \cite{brennerlinearbound}). The first result in this direction is that for parameters ($\nog= \dip+1= \operatorname{dim} (R)$) of degrees $\degg_1 \komdots \degg_\nog$ in a graded ring $R$ the inclusion $R_{\sum \degg_i} \subseteq I^*$ holds
(see \cite[Theorem 2.9]{hunekeparameter}). This is an application of
the Brian\c con-Skoda theorem, and beside being parameters no further
genericity condition is required. For more than
$\operatorname{dim}(R)$ generators one can not expect a degree bound
without any furher assumptions. For constant degree $\degg$ one
obtains the degree bound $ \frac{(
\operatorname{dim}(R)-1)n}{\nog-1}a$ under the condition that the
top-dimensional syzygy bundle is strongly semistable (see
\cite[Corollary 2.8]{brennerlinearbound}). This result is best
possible in dimension two, but not in higher dimensions. We will show
that in dimension three, under sufficient genericity conditions, there
exists a degree bound which behaves asymptotically like 
$\frac{n + \sqrt{n}}{n-1} a$. This can be deduced from our main theorem.

\begin{theoremintro}
\label{maintheoremintro}
Let $\dip \geq 1$. Let $\degg_1, \ldots, \degg_\nog$ be natural numbers, $\nog \geq \dip +1$. Let $m \in \NN$. Suppose that there exist $g_1 \comdots g_\nog \in P=k[x_0, \ldots, x_{\dip}]$, $\deg(g_i)=\degg_i$, such that $P_m \subseteq (g_1 \comdots g_\nog)$. Let $R$
be a $(\dip +1)$-dimensional standard-graded $k$-algebra with a graded
Noether normalization $P \subseteq R$. Let $f_1 \comdots f_\nog \in R$
denote elements of degree $\deg(f_i)=\degg_i$.

\begin{enumerate}
\item[(a)] The containment
\[ R_{m+\dip} \subseteq (f_1 \comdots f_\nog)^* \]
holds in the generic point of the space parametrizing these tuples and also countably generically 
(for these notions cf. Definition \ref{genericdef}(b),(c)).
\item[(b)]
If $R$ is normal, then for generic elements  $f_1 \comdots f_\nog \in
R$ with $\deg(f_i)=\degg_i$, we have
$R_{m+\dip +1} \subseteq (f_1 \comdots f_\nog)^F$.

\item[(c)]
If $R$ is Cohen-Macaulay with $a$-invariant $\ainv$ and of dimension $\geq 2$,
then
$R_{m+\dip +1 + \ainv } \subseteq (f_1 \comdots f_\nog)$
for generic elements $f_1 \comdots f_\nog \in
R$ with $\deg(f_i)=\degg_i$.
\end{enumerate}
\end{theoremintro}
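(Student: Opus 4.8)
The plan is to reduce everything to a cohomological vanishing statement for syzygy bundles on $\Proj R$, and then to transfer it from the polynomial ring $P$ to $R$ by semicontinuity. First I would introduce the (top) syzygy bundle associated to the data $f_1 \comdots f_\nog$: on $Y = \Proj R$ one has the short exact sequence $0 \to \Syz(f_1 \comdots f_\nog)(m') \to \bigoplus_{i=1}^{\nog} \O_Y(m'-\degg_i) \xrightarrow{(f_1 \comdots f_\nog)} \O_Y(m') \to 0$ for every twist $m'$. The standard cohomological criterion for tight closure membership — $x \in (f_1 \comdots f_\nog)^*$ holds iff the image of $x$ under the connecting map $H^0(Y, \O_Y(m')) \to H^1(Y, \Syz(f_1 \comdots f_\nog)(m'))$ lies in the "tight closure" of zero in the sense of the Frobenius action on $H^1$ — is what drives part (a). So the first real step is: for $x \in R_{m+\dip}$, show the connecting cohomology class dies in $H^1(Y, \Syz(\cdots)(m+\dip))$ under all Frobenius powers (in char $p$), which is exactly tight closure membership; and similarly a class that dies under a single Frobenius power gives Frobenius closure, which is where the extra $+1$ in (b) comes from once normality is used to control $H^0$.

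The key geometric input is the hypothesis that $P_m \subseteq (g_1 \comdots g_\nog)$ for some choice of the $g_i$ in the polynomial ring. Via the Noether normalization $\pi\colon \Proj R \to \PP^{\dip}$ (a finite surjective morphism), I would pull back this ideal-membership statement: on $\PP^\dip$ it says that the syzygy bundle $\Syz(g_1 \comdots g_\nog)(m)$ has the property that $H^1(\PP^\dip, \Syz(g_1 \comdots g_\nog)(m')) \to$ (something) is surjective/the connecting map is zero in the relevant range, i.e.\ the $g_i$ generate everything in degree $\geq m$. Pulling back along the finite map $\pi$ and chasing the projection formula, the twist needed on $Y = \Proj R$ picks up a shift by $\dip$ coming from $\pi_* \O_Y$ containing $\O_{\PP^\dip} \oplus \O_{\PP^\dip}(-1)^{\oplus ?} \oplus \cdots$ up to $\O_{\PP^\dip}(-\dip)$ in the worst case (the $a$-invariant of the polynomial ring $P$ is $-\dip-1$, giving the regularity shift). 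This explains the appearance of $m + \dip$. For part (c), where $R$ is Cohen--Macaulay with $a$-invariant $\ainv$, the class in $H^1$ actually \emph{is} zero (not merely tightly zero), because $H^1_{\fom}(R)$-type obstructions are governed by $\ainv$, so one gets honest ideal membership after the further shift by $\ainv + 1$; this part is characteristic-free and is really a statement about the regularity of $R$ relative to its Noether normalization.

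The genericity/semicontinuity step is what upgrades "there exist $g_i$" to "for generic $f_i$". I would set up the parameter space $T = \prod_i R_{\degg_i}$ (or the relevant projective/affine scheme of tuples) and consider the family of syzygy sheaves over $T \times Y$; the function $t \mapsto \dim_k H^1(Y_t, \Syz_t(m'))$ is upper semicontinuous, and the locus where the relevant connecting class vanishes is open (or at least constructible, containing the generic point). Since the polynomial-ring data $(P, g_i)$ corresponds to a point — after base change to a ring where $R$ becomes a free $P$-module, i.e.\ passing through the generic Noether normalization — lying in the "good" locus, that locus is nonempty, hence dense. I would need to be careful about the difference between the generic point (part (a), first clause), countable genericity (part (a), second clause — a countable intersection of dense opens, one for each Frobenius power $q = p^e$), and actual generic elements over an uncountable field (parts (b), (c)), matching Definition~\ref{genericdef}. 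The main obstacle I expect is the bookkeeping in the pullback-along-$\pi$ step: precisely controlling how the twist shifts under the finite morphism, confirming that $\dip$ (and not $\dip+1$ or something worse) is the right shift, and ensuring the Frobenius action on cohomology is compatible with all the base changes so that the tight-closure interpretation survives passing to the generic point. The semicontinuity itself is routine; squaring the cohomological criterion with the finite-morphism twist is the delicate part.
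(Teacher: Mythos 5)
Your high-level frame (cohomological criterion for $I^*$ via a connecting class in $H^1(\Syz_1(m'))$, then upgrade from the special choice $g_i$ to generic $f_i$ by semicontinuity) agrees with the paper, but the step that actually produces the shift by $\dip$ is wrong as you have sketched it, and the key structural fact on which the whole argument rests is missing.

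You claim the $+\dip$ comes from chasing the projection formula for the finite map $\pi\colon Y\to\PP^{\dip}$ and from a bound ``$\pi_*\O_Y$ contains twists down to $\O_{\PP^{\dip}}(-\dip)$ in the worst case.'' That bound is false: for a finite cover $Y\to\PP^{\dip}$ the twists in $\pi_*\O_Y$ depend on $R$ and can be arbitrarily negative (take $R=P[z]/(z^d-F)$ with $d$ large), so this route would give a shift that blows up with $R$, contradicting the very point of the theorem. Moreover, a pushforward argument would not give you the tight-closure conclusion, because you would lose control of the Frobenius action along $\pi$. The paper goes in the opposite direction: it takes the \emph{minimal free resolution of $(g_1\comdots g_\nog)$ in $P$}, which has length $\dip+1$, pulls it back as a complex of sheaves along $Y\to\PP^{\dip}$ (exactness is preserved off the irrelevant locus), and uses that the top syzygy sheaf on $Y$ is then a \emph{split} bundle $\bigoplus_i\O_Y(-\alpha_{\dip,i})$. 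The hypothesis $P_m\subseteq(g_1\comdots g_\nog)$ is then converted, via the structure of minimal resolutions over $P$ (Lemma~\ref{alpha-estimate-lemma}), into the bound $\alpha_{\dip,i}\le m+\dip$. That bound, not any property of $\pi_*\O_Y$, is the source of the $+\dip$.

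There is a second missing step: once one knows $H^{\dip}(Y,F^{e*}\Syz_{\dip}(\tilde m)\otimes\O_Y(\ell'))=0$ (trivial once $\Syz_{\dip}$ splits into non-negatively twisted line bundles), one must transport the vanishing down to $H^1(\Syz_1)$. This is the cohomology-hopping Lemma~\ref{tools-tightclosure}, which requires choosing $\ell$ large enough to kill all intermediate $H^j(\O_Y(\cdot))$. You work with $\Syz_1$ from the start but never justify its $H^1$-vanishing. And the reason the paper insists on landing at $\Syz_1$ rather than applying semicontinuity to $\Syz_{\dip}$ directly is also a genuine issue you should address: the top syzygy bundle does not vary flatly over the parameter space of tuples, whereas $\Syz_1$ does, so semicontinuity is only available for $\Syz_1$. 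In short, you have the right skeleton but the two central lemmas --- the Betti-twist estimate in $P$ and cohomology hopping on $Y$ --- are absent, and the replacement mechanism you propose does not hold.
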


We give a rough outline of the steps in our proof (part (a)).
\begin{enumerate}
\item[1.] We choose a homogeneous Noether normalization $P \subseteq R$ and choose $g_1 \comdots g_\nog \in P$ for which $P_m \subseteq (g_1 \comdots g_\nog) \subseteq P$ holds. This containment tells us that the last module in the minimal free resolution of the ideal $(g_1 \comdots g_\nog)$ is of the form 
\[\bigoplus_{i=1}^{n_\dip} P(-\alpha_{\dip,i})\; \text{with}\;
\alpha_{\dip,i} \leq m + \dip\] (Lemma \ref{alpha-estimate-lemma}).
\item[2.] The pullback of this resolution from $P$ to $R$
(cf. (\ref{minrespullback})) is still exact on
the punctured spectrum and on $Y=\Proj R$. The last syzygy bundle 
$\Syz_\dip:=\Syz_\dip(g_1 \comdots g_\nog)$ on $Y$ of this sheaf resolution is the splitting bundle $\Syz_\dip = \oplus \O_Y(- \alpha_{\dip,i})$ (cf. (\ref{topsyzsplittingR})). For $\tilde m \geq m+\dip$, we obtain 
\[\Syz_\dip(\tilde m) \cong \bigoplus_{i=1}^{n_\dip} \O_Y(\tilde m-\alpha_{\dip,i})\; \text{with}\; \tilde m-\alpha_{\dip,i} \geq 0 \; \text{for all}\; i.\]
Note that all twists are non-negative.
\item[3.] We deduce from the above that $H^\dip(Y,\Syz_\dip(\tilde m))$ has the
following ``tight closure'' property: For every $c \in
H^\dip(Y,\Syz_\dip(\tilde m))$, there exists $\test \in R$ (not in any minimal prime) such that $\test
F^{e*}(c)=0$ for almost all $e \in \NN$. 
\item[4.] By ``cohomology hopping'' (Lemma~\ref{cohomologyhopping-tight}), 
 $H^1(Y,\Syz_1(g_1 \comdots g_\nog))$ then has the same property.
\item[5.] In order to generalize from $g_1 \comdots g_\nog$ to generic
elements $f_1 \comdots f_\nog \in R$ (where each 
$f_i$ is homogeneous of the same degree as $g_i$), we introduce a suitable parametrizing space $Z$ 
whose points represent the coefficients of the $f_i$. The syzygy
bundles $\Syz_1(f_1 \comdots f_n)$ 
for various choices of $f_1 \comdots f_n$ may be viewed as the fibers
$(\firstsyzbig)_z$ ($z \in Z$) of one ``big'' vector bundle $\firstsyzbig \lra Y
\times Z \lra Z$.
\item[6.] The semicontinuity theorem implies that the cohomological tight 
closure criterion holds not only for $H^1(Y,\Syz_1(g_1 \comdots g_\nog))$ 
(from Step~3), but also for  
$$H^1(Y,\Syz_1(f_1 \comdots f_n))=H^1((Y \times Z)_z,(\firstsyzbig)_z)$$
for sufficiently generic points $z \in Z$, i.e. for ``countably generic'' 
choices of $f_1 \comdots f_\nog \in R$, (Lemma~\ref{semicont}). 
This implies that $R_{\tilde m} \subseteq (f_1 \comdots f_\nog)^*$ for countably generic choice (Lemma~\ref{tools-tightclosure}(b)).
\end{enumerate}

The principle behind Theorem~\ref{maintheoremintro} is that every
generic ideal inclusion bound in the polynomial ring gives generic
tight closure degree bounds in any graded ring. Generic bounds in the
polynomial ring are directly related to the Fr\"oberg conjecture
(Section \ref{ideal-inclusion}). Hence known cases of this conjecture
together with our main theorem yield many new concrete degree bounds
for tight closure (Section \ref{examples}). For example, the
Fr\"oberg conjecture is known to hold true in the three-dimensional polynomial ring $k[x_0,x_1,x_2]$, and if moreover we have constant degrees 
$\degg_1 = \ldots = \degg_\nog=:\degg$ with $\nog
\geq 4$, then we can 
deduce the explicit ideal inclusion bound
\[m_0= \Bigm \lceil \frac{1}{2(\nog-1)} \bigm(
3-3\nog+2\degg \nog + \sqrt{1-2\nog + \nog^2 + 4 \degg^2 \nog}
\bigm)
\Bigm
\rceil  \]
in the polynomial ring (Lemma~\ref{dim3bound}). 
Theorem~\ref{maintheoremintro} then implies that
\[m_0+2 = \Bigm \lceil \frac{1}{2(\nog-1)} \bigm(
-1+\nog+2\degg \nog + \sqrt{1-2\nog + \nog^2 + 4 \degg^2 \nog} 
\bigm)\Bigm \rceil  \]
is a generic tight closure bound in any standard-graded $k$-algebra of 
dimension three. As indicated above, for 
$\degg \gg 0$, these bounds asymptotically 
behave like $\frac{n+\sqrt{n}}{n-1} \degg$
(Remark~\ref{asymptotic-large-degree}), which is a considerable
improvement on previously known bounds.

\begin{acknowledgements} 
This research was funded by an EPSRC first grant (``Tight closure and strong semistability of vector bundles in higher dimensions'') held at the University of Sheffield and at the Universit\"at Osnabr\"uck. 
In order to understand the behaviour of the Fr\"oberg function
and to find its zeros, it has been very helpful to have the
computer algebra packages \emph{Axiom} and \emph{CoCoA} as 
well as a \emph{Lisp} program by Thomas Fischbacher which, 
among other things, computed a conjectural formula for the 
smallest positive zero using continued fractions. 
We would like to thank Almar Kaid for his very useful implementation 
of a semistability test in \emph{CoCoA} and for his help with the 
\emph{CoCoA} computations in Example \ref{tabellen}. We also thank the referee,
Axel St\"abler and Ramesh Satkurunath for useful comments.
\end{acknowledgements}

\section{Cohomological vanishing conditions for tight closure and for Frobenius closure}
\label{prelim}
In this section, we recall the cohomological interpretation of Frobenius
closure and tight closure. The methods presented in this section
will be used later to find degree bounds for the Frobenius closure
and tight closure of a generic primary ideal in a standard-graded
ring of dimension at least two.

Let $k$ be an algebraically closed field of characteristic $p >0$, and let $R$
be a normal, $\NN$-graded, finitely generated $k$-algebra of
dimension
$\dip+1$, $\dip \geq 1$. Assume further
that the $\dip$-dimensional projective variety $Y:=\Proj R$ is
covered by the open subsets $D_+(x)$ with $x \in R_1$
(this holds in particular if $R$ is standard-graded, i.e. if
it is finitely generated by $R_1$ as an $R_0$-algebra). Let $I=(f_1
\comdots f_\nog)$ be a homogeneous $R_+$-primary ideal (i.e.
$D(I)=D(R_+)$). A homogeneous free complex which is exact on the punctured spectrum $D(R_+)$ (for example, a
resolution)
\[ \xymatrix{ \ldots \ar[r] & R^{n_3} \ar[r] &
R^{n_2}
\ar[r] & R^\nog \ar[r]^<<<<<{f_1 \comdots f_\nog} & R \ar[r] & R/I
\ar[r] & 0} \]
induces an exact complex of sheaves on $Y=\Proj R$:
\begin{equation}
\label{one}
\xymatrix{
 \ldots \ar[r]  &
\bigoplus_{i=1}^{n_2} \O_Y(-\alpha_{2,i}) \ar[r] &  \bigoplus_{i=1}^{n}
\O_Y(-\alpha_{1,i}) \ar[r]^<<<<<{f_1 \comdots f_n} & \O_Y \ar[r] & 0}
\end{equation}
where the $\alpha_{1,i}$ are the degrees of the
$f_i$ (the term $R/I$ on the right becomes~$0$).

Let $\Syz_j:= \Syz_j (f_1 \comdots f_\nog)$ be the image and kernel in the $j$-th
term, such that we have short exact sequences of sheaves for $j=1$,
\begin{equation}
\label{two}
\xymatrix{
0 \ar[r] & \Syz_1 \ar[r] &
\bigoplus_{i=1}^\nog \O_Y(-\alpha_{1,i}) \ar[r]^<<<<<{f_1 \comdots f_\nog} &
\O_Y \ar[r] & 0 \, ,}
\end{equation}
and for $j =2 \comdots \dip$,
\begin{equation}
\label{three}
\xymatrix{
0 \ar[r] & \Syz_j \ar[r] &
\bigoplus_{i=1}^{n_j} \O_Y(-\alpha_{j,i}) \ar[r] &
\Syz_{j-1} \ar[r] & 0 \, .}
\end{equation}

\newcommand{\twistell}{{\ell}}

The covering property $Y=\bigcup_{x \in R_1} D_+(x)$ implies that
$\O_Y(\twistell)$ is locally free for all $\twistell \in \ZZ$ (see the proof of
\cite[II.5.12]{hartshornealgebraic}); it follows
inductively that the sheaves $\Syz_j$ are locally free, since they
are kernels of surjective homomorphisms between locally free
sheaves. The first syzygy bundle $\Syz_1$ and the ``top-dimensional'' syzygy
bundle $\Syz_\dip$ are most important for us.

Now let $f \in R_m$, which can be identified with  $H^0(Y, \O_Y(m))$
since $R$ is normal and of dimension $\geq 2$. For $j \in \{1, \ldots, \dip\}$, we inductively define cohomology classes $c_j \in H^j(Y, \Syz_j(m))$ as follows.
For $j=1$, consider the long exact cohomology sequence associated to the $m$-twist
of (\ref{two}), let $\delta^0$ denote the connecting homomorphism $H^0(Y, \O_Y(m)) \rightarrow H^1(Y, \Syz_1(m))$, and set $c_1:=\delta^0(f) \in H^1(Y, \Syz_1(m))$. For $j=2 \comdots \dip$, consider the cohomology sequence associated to the $m$-twist of (\ref{three}), let $\delta^{j-1}$ denote the connecting homomorphism $H^{j-1}(\Syz_{j-1}(m)) \rightarrow H^j(\Syz_j(m))$, and set $c_j:=\delta^{j-1}(c_{j-1}) \in H^j(Y, \Syz_j(m))$.

\begin{lemma}[``Cohomology hopping'' for Cohen-Macaulay rings] \label{CM-cohomologyhopping}
If $R$ is Cohen-Macaulay of dimension $\geq 2$, then $f \in I$
if and only if $c_j=0$ for some (or all) $j \in \{1 \comdots \dip\}$.
\end{lemma}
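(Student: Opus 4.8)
The plan is to push the whole equivalence down to the bottom class $c_1$, and then to use the Cohen--Macaulay hypothesis only to show that the higher classes $c_2,\dots,c_\dip$ carry exactly the same information as $c_1$. \emph{Step 1 ($c_1=0$ if and only if $f\in I$, no Cohen--Macaulay hypothesis needed).} I would read $c_1$ off the long exact cohomology sequence of the $m$-twist of (\ref{two}), in particular the segment
\[ \bigoplus_{i=1}^{\nog} H^0(Y,\O_Y(m-\alpha_{1,i})) \xrightarrow{\ \varphi\ } H^0(Y,\O_Y(m)) \xrightarrow{\ \delta^0\ } H^1(Y,\Syz_1(m)). \]
Since $c_1=\delta^0(f)$, we have $c_1=0$ precisely when $f\in\im\varphi$. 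As $R$ is normal of dimension $\geq 2$, the natural map $R\to\bigoplus_\ell H^0(Y,\O_Y(\ell))$ has kernel $H^0_{R_+}(R)$ and cokernel $H^1_{R_+}(R)$, both zero because $\operatorname{depth}_{R_+}R\geq 2$; hence $H^0(Y,\O_Y(\ell))=R_\ell$ for every $\ell$, under which identification $\varphi$ is the map $(g_i)_i\mapsto\sum_i f_ig_i$, so $\im\varphi=\sum_i f_iR_{m-\alpha_{1,i}}=I_m$. Since $f$ is homogeneous of degree $m$, this gives $c_1=0\iff f\in I$. In particular $f\in I$ forces $c_1=0$, and then $c_j=\delta^{j-1}(c_{j-1})=0$ for all $j$ by linearity of the connecting maps; this already yields the ``only if'' direction.

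\emph{Step 2 (the Cohen--Macaulay hypothesis makes each $\delta^{j-1}$ injective).} Fix $j$ with $2\leq j\leq\dip$. From the long exact sequence of the $m$-twist of (\ref{three}), the kernel of $\delta^{j-1}\colon H^{j-1}(Y,\Syz_{j-1}(m))\to H^j(Y,\Syz_j(m))$ is the image of $H^{j-1}\bigl(Y,\bigoplus_{i=1}^{n_j}\O_Y(m-\alpha_{j,i})\bigr)$, so it suffices that this group vanishes. Now $R$ Cohen--Macaulay of dimension $\dip+1$ gives $H^t_{R_+}(R)=0$ for all $t\leq\dip$, and since $H^i(Y,\O_Y(\ell))\cong H^{i+1}_{R_+}(R)_\ell$ for $i\geq 1$ we obtain $H^i(Y,\O_Y(\ell))=0$ for all $\ell$ whenever $1\leq i\leq\dip-1$. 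For $2\leq j\leq\dip$ one has $1\leq j-1\leq\dip-1$, so indeed $H^{j-1}(Y,\O_Y(m-\alpha_{j,i}))=0$ for each $i$ and the direct sum vanishes. Hence $\delta^{j-1}$ is injective, and $c_j=\delta^{j-1}(c_{j-1})=0$ forces $c_{j-1}=0$.

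\emph{Step 3 and the main point.} Chaining Step 2 downward from $j=\dip$ to $j=2$ and appending Step 1 shows $c_\dip=0\iff\cdots\iff c_1=0\iff f\in I$, so ``for some $j$'' and ``for all $j$'' coincide and are equivalent to membership in $I$; this is the lemma. There is no conceptual obstacle here --- the argument is diagram chasing together with the standard dictionary between $H^\bullet(Y,\O_Y(\ell))$ and local cohomology of $R$. The one place to be careful is the index bookkeeping: one must check that the twisting sheaf sitting one term before each $\delta^{j-1}$ has vanishing $(j-1)$-st cohomology, and that $j-1$ stays inside the window $[1,\dip-1]$ on which Cohen--Macaulayness kills $H^\bullet(Y,\O_Y(\ell))$, with the boundary indices $j=2$ and $j=\dip$ verified explicitly. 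This also shows why $c_\dip$ is \emph{not} automatically zero: the top syzygy bundle $\Syz_\dip$ itself has nonvanishing $H^\dip$, which lies just outside that window.
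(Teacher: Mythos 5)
Your proof is correct and follows essentially the same route as the paper: Cohen--Macaulayness kills the intermediate cohomology $H^j(Y,\O_Y(\ell))$ for $1\leq j\leq \dip-1$, which makes the connecting maps $\delta^{j-1}$ injective (in fact isomorphisms for $2\leq j\leq\dip-1$, as the paper notes), and then one chains the equivalence back to $c_1=0$. The only difference worth mentioning is that you spell out the base step $c_1=0\iff f\in I$ via the identification $H^0(Y,\O_Y(\ell))\cong R_\ell$ (coming from $\operatorname{depth}_{R_+}R\geq 2$), whereas the paper simply asserts this equivalence; also note that this identification already follows from the Cohen--Macaulay assumption of the lemma (depth $\geq 2$), so you need not invoke normality there.
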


\begin{proof} 
If $R$ is Cohen-Macaulay, then $H^j(Y, \O_Y(\twistell))=0$ for
$j=1 \comdots \dip-1$ and all $\twistell \in \ZZ$. Hence for $j=2 \comdots \dip-1$, part of the long exact cohomology sequence associated to the $m$-twist of (\ref{three}) looks like
\begin{equation} \label{CM-coh-isomorphisms} 
0 \longrightarrow H^{j-1}(\Syz_{j-1}(m)) \stackrel{\delta^{j-1}}{\longrightarrow} H^j(\Syz_j(m)) \longrightarrow 0, \end{equation}
so $\delta^j$ is an isomorphism for these $j$. 
Moreover, $\delta^{\dip-1}$ is injective. Hence we have
\[ f \in I \Leftrightarrow c_1=\delta^0(f)=0 \Leftrightarrow c_2=\delta^1(c_1)=0 \Leftrightarrow \ldots \Leftrightarrow c_{\dip}=\delta^{\dip-1}(c_{\dip-1})=0.\]
\end{proof}

Even without the Cohen-Macaulay assumption, the containment in the Frobenius
closure and in the tight closure can be expressed in terms of the
cohomology classes $c_j$.

Let $F: Y \ra Y$ denote the absolute Frobenius morphism.
If $c$ is a class in $H^j(Y, \mathcal{S})$ for some locally free sheaf
$\mathcal{S}$ and some $j >0$, and if $q=p^e$ with $e \in \NN$, then
we write $c^q$ for the image of $c$ under the Frobenius pullback
${F^e}^*: H^j(Y, \mathcal{S}) \ra H^j(Y, {F^e}^* \mathcal{S})$.
The next two rather technical lemmas describe two cohomological vanishing conditions for syzygy
bundles which imply tight closure and Frobenius closure containments.

\begin{lemma}[Cohomology hopping and a cohomological criterion 
for tight closure]
\label{tools-tightclosure}
\label{cohomologyhopping-tight}
Let $\twistell$ be such that
$\twistell \geq 0$ and $H^j(Y,\O_Y(\twistell'))=0$ for $j=1
\comdots \dip$ and for all $\twistell' \geq \twistell$ (such $\twistell$ always exists by Serre vanishing \cite[Theorem III.5.2]{hartshornealgebraic}). 
Let $m \geq\alpha_{j,i}$ for $j=1 \comdots \dip$ and $i=1 \comdots n_j$,
where $\alpha_{j,i}$ are the twists appearing in (\ref{one}). 
\begin{enumerate}
\item[(a)]
Let $e \in \NN, q=p^e$, and let $\twistell' \geq \twistell$.
Then we have a surjective map
\[ H^0(Y, \O_Y(qm+\twistell')) \stackrel{\delta^0}{\lra} H^1(Y,F^{e*}(\Syz_1(m)) \otimes \O_Y(\twistell')), \]
isomorphisms
\[ H^{j-1}(Y,F^{e*}(\Syz_{j-1}(m)) \otimes \O_Y(\twistell')) \stackrel{\delta^{j-1}}{ \lra} H^j(F^{e*}(\Syz_j(m)) \otimes \O_Y(\twistell'))\]
for $j=2 \comdots \dip-1$, and an injective map
\[ H^{\dip-1}(Y,F^{e*}(\Syz_{\dip-1}(m)) \otimes \O_Y(\twistell')) \stackrel{\delta^{\dip-1}}{\lra} H^\dip(F^{e*}(\Syz_\dip(m)) \otimes \O_Y(\twistell')).\]
\item[(b)] Let $e \in \NN, q=p^e$. Let $f \in R_m$, and let
$c_j \in H^j(Y,\Syz_j (m))$ be the cohomology 
classes defined above. Let $\test \in R_{\twistell'}$,
$\twistell' \geq \twistell$,
$\test \notin$ minimal prime. Then 
$\test f^q \in I^{[q]}$ if and only if $\test c_j^q=0$ for some (or all) 
$j \in \{1 \comdots \dip\}$.
\item[(c)] Suppose there exists $j \in \{1 \comdots \dip\}$ and 
$\twistell_0 \geq \twistell$ such that
$H^j(Y, {F^e}^*(\Syz_j(m)) \otimes \O_Y(\twistell')) =0$ for all $e \gg 0$ and all $\twistell' \geq \twistell_0$. Then the same cohomology property holds also for $j' \leq j$ and
$R_m \subseteq I^*$.
\end{enumerate}
\end{lemma}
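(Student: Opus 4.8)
The plan is to derive (a), (b) and (c) in turn by chasing the long exact cohomology sequences attached to Frobenius pullbacks and twists of the syzygy sequences (\ref{two}) and (\ref{three}). Two elementary observations do the work. First, in each of (\ref{two}), (\ref{three}) the quotient sheaf is locally free, hence the sequence is locally split, so applying the $e$-th absolute Frobenius pullback ${F^e}^*$ keeps it exact; moreover ${F^e}^*\O_Y(\twistell')=\O_Y(q\twistell')$ and ${F^e}^*$ carries multiplication by $f_i$ to multiplication by $f_i^q$. Thus ${F^e}^*$ of the $m$-twist of (\ref{two}), tensored with $\O_Y(\twistell')$, is the short exact sequence
\[ 0 \lra {F^e}^*(\Syz_1(m)) \otimes \O_Y(\twistell') \lra \bigoplus_{i=1}^{\nog} \O_Y\bigl(q(m-\alpha_{1,i})+\twistell'\bigr) \xrightarrow{\,(f_1^q \comdots f_\nog^q)\,} \O_Y(qm+\twistell') \lra 0, \]
and likewise for the $m$-twist of (\ref{three}), with $\alpha_{j,i}$ in the middle and ${F^e}^*(\Syz_{j-1}(m)) \otimes \O_Y(\twistell')$ on the right. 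Second, and this is where the numerical hypotheses enter, every line-bundle summand occurring in the middle has twist $q(m-\alpha_{j,i})+\twistell' \geq \twistell$, since $m \geq \alpha_{j,i}$ forces $q(m-\alpha_{j,i}) \geq 0$ and $\twistell' \geq \twistell \geq 0$; hence its cohomology vanishes in all degrees $1 \comdots \dip$ by the choice of $\twistell$. Part (a) follows at once from the resulting long exact sequences: for the first sequence $H^1$ of the middle sheaf vanishes, so $\delta^0$ is surjective; for the $j$-th one ($2 \leq j \leq \dip$) both $H^{j-1}$ and $H^j$ of the middle sheaf vanish, so $\delta^{j-1}$ is an isomorphism — in particular injective, which is the assertion for $j=\dip$. (When $\dip=1$ only the first sequence occurs and (a) reduces to surjectivity of $\delta^0$.)

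For (b), consider first the claim "for some iff for all". The connecting homomorphisms are functorial under ${F^e}^*$ and under multiplication by a global section, and $\test \in R_{\twistell'}=H^0(Y,\O_Y(\twistell'))$ because $R$ is normal of dimension $\geq 2$; hence $\test c_j^q = \delta^{j-1}(\test c_{j-1}^q)$ for $2 \leq j \leq \dip$, with $\delta^{j-1}$ the twisted connecting map from (a). Since every such $\delta^{j-1}$ is injective, $\test c_j^q$ vanishes for one value of $j$ iff for all of them. For the main equivalence, take the long exact sequence of the displayed short exact sequence above. Under the identification $H^0(Y,\O_Y(\ell))=R_\ell$ for $\ell \geq 0$ (again $R$ normal of dimension $\geq 2$), the image of the middle map is precisely the degree-$(qm+\twistell')$ piece of $I^{[q]}=(f_1^q \comdots f_\nog^q)$; since $\test f^q$ is homogeneous of that degree, exactness gives $\test f^q \in I^{[q]} \Leftrightarrow \delta^0(\test f^q)=0$. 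Finally $\delta^0(\test f^q) = \test\,\delta^0(f^q) = \test\,{F^e}^*(\delta^0 f) = \test c_1^q$, by functoriality of $\delta^0$ in $\test$ and in ${F^e}^*$; so $\test f^q \in I^{[q]} \Leftrightarrow \test c_1^q=0$, which with the preceding step is exactly (b).

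For (c), fix $\twistell' \geq \twistell_0$ and $e$ large enough for the hypothesis to apply. In the long exact sequences of (a) for this $\twistell'$ one now observes in addition that the middle sheaf of the $\dip$-th pulled-back twisted sequence (\ref{three}) has vanishing $H^\dip$ as well — the twist is again $\geq \twistell$ because $m \geq \alpha_{\dip,i}$ — so $\delta^{\dip-1}$ is in fact an isomorphism, not merely injective. Together with the isomorphisms for $2 \leq j \leq \dip-1$, this yields a chain of isomorphisms $H^1 \cong H^2 \cong \cdots \cong H^\dip$ among the groups $H^s(Y,{F^e}^*(\Syz_s(m)) \otimes \O_Y(\twistell'))$; as one of them vanishes by hypothesis (at $s=j$), all of them do, and in particular the stated cohomology property holds for every $j' \leq j$. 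Applying this at $j'=1$: pick a homogeneous $\test \in R_{\twistell'}$ for some $\twistell' \geq \max(\twistell,\twistell_0)$ not lying in any minimal prime of $R$ — such $\test$ exists for suitable $\twistell'$, and enlarging $\twistell'$ preserves the cohomology property. Then for every $f \in R_m$ and all $e \gg 0$ the class $\test c_1^q$ lies in $H^1(Y,{F^e}^*(\Syz_1(m)) \otimes \O_Y(\twistell'))=0$, so $\test c_1^q=0$, so by (b) $\test f^q \in I^{[q]}$ for almost all $q$; since $\test$ avoids the minimal primes this means $f \in I^*$. Hence $R_m \subseteq I^*$.

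The arguments above are mostly bookkeeping, and the only points requiring genuine care are the functoriality compatibilities — that each connecting homomorphism commutes with the Frobenius pullback ${F^e}^*$ and with multiplication by the global section $\test$ — since these are what allow one to pass between the membership $\test f^q \in I^{[q]}$, the classes $\test c_j^q$, and the vanishing of cohomology groups. The single structural ingredient, exactness of ${F^e}^*$ on (\ref{two}) and (\ref{three}), is supplied by the local splitting coming from local freeness of the quotient sheaves.
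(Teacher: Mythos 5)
Your proof is correct and follows essentially the same long-exact-sequence chase as the paper: twist by $m$, apply $F^{e*}$, twist by $\twistell'$, and use the vanishing $H^i(Y,\O_Y(t))=0$ for $i=1\comdots\dip$ and $t\geq\twistell$ (which applies to all middle terms since $m\geq\alpha_{j,i}$) to control the connecting maps. You are in fact slightly more thorough than the paper's own write-up in two harmless ways — you spell out the functoriality of the connecting maps under $F^{e*}$ and under multiplication by $\test$, and you note that $\delta^{\dip-1}$ is actually an isomorphism, which lets you justify the ``$j'\leq j$'' assertion in (c) that the paper leaves implicit — but the route is the same.
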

\begin{proof}
We start with (\ref{two}) and (\ref{three}), take the $m$-twist,
then the $e$-th
Frobenius pullback, then the $\twistell'$-twist
(the sequences remain exact because the sheaves are locally free) and
finally we take cohomology.
From (\ref{two}) we thus obtain
\begin{equation} \label{cohomologysequencefromtwo} 
\xymatrix{ \ldots  \ar[r] & \bigoplus_i H^{0}\Big(Y,
\O_Y(q(m-\alpha_{1,i})+\twistell')\Big) \ar[r]^<<<<<{f_1^q \comdots f_\nog^q} &
H^{0} \Big(Y, \O_Y(qm+\twistell') \Big)}\end{equation}
\[ \xymatrix{ \ar[r]^<<<<<{\delta^0} & H^1\Big(Y,
\big({F^e}^*(\Syz_j(m))\big) \big(\twistell'\big) \Big) \ar[r] & \bigoplus_i H^1\Big(Y,
\O_Y(q(m-\alpha_{1,i})+\twistell')\Big) \ar[r] & \ldots}\]
and for $j=2 \comdots \dip$ we get from (\ref{three}),
\begin{equation}\label{cohomologysequencefromthree} 
\xymatrix{ \ldots  \ar[r] & \bigoplus_i H^{j-1}\Big(Y,
\O_Y(q(m-\alpha_{j,i})+\twistell')\Big) \ar[r] &
H^{j-1}\Big(Y, \big({F^e}^*(\Syz_{j-1}(m))\big)\big(\twistell'\big) \Big) }\end{equation}
\[\xymatrix{\ar[r]^<<<<<{\delta^{j-1}} &
H^j \Big(Y,\big({F^e}^*(\Syz_j(m))\big) \big(\twistell'\big) \Big)
\ar[r]& \bigoplus_i H^{j}\Big(Y, \O_Y(q(m-\alpha_{j,i})+\twistell')\Big)
\ar[r] & \ldots} \]
By the assumptions made on $\twistell'$ and $m$, we have
$$H^j(\O_Y(q(m-\alpha_{j,i})+\twistell')=0$$
for $j=1 \comdots \dip$, so
$\delta^{j-1}$ is surjective for
$j=1$, bijective for $j=2 \comdots \dip-1$ and injective for $j=\dip$,
which proves part (a). 

With $\test, f, c_j$ as in part (b), it follows
that
\[ \test f^q \in I^{[q]}=\im(f_1^q \comdots f_n^q) 
\Leftrightarrow \test c_1^q=\delta^0(\test f^q)=0 \Leftrightarrow \test c_2^q=\delta^1(\test c_1^q)=0 
\Leftrightarrow \ldots\]
\[ \ldots \Leftrightarrow \test c_{\dip}^q=\delta^{\dip-1}(\test c_{\dip-1}^q)=0,\]
which proves part (b).

For part (c), note that it is always possible to find 
$\twistell' \geq \twistell_0$ and $\test \in R_{\twistell'}$,  
$\test \notin$ minimal prime. Now let $f \in R_m$ and let $c_j$ be the associated cohomology class in $H^j(\Syz_j(m))$. 
If $H^{j}(Y, {F^e}^*(\Syz_j (m)) \otimes
\O_Y(\twistell')) =0$ for $e \gg 0$, then we have $\test c_j^q=0$ 
for almost all $q=p^e$. By part (b), this
implies that $\test f^q \in I^{[q]}$ for almost all $q$, and hence
$f \in I^*$. Since $f \in R_m$ was chosen arbitrarily, this proves part (c).
\end{proof}

\begin{lemma}[A cohomological criterion for Frobenius
closure]
\label{tools-Frobeniusclosure}
Let $m > \alpha_{j,i}$ for all $j,i$,
where $\alpha_{j,i}$ are the Betti twists
from (\ref{one}). Fix $j_0 \in \{1
\comdots \dip \}$. If $H^{j_0} (Y, {F^e}^*(\Syz_{j_0}(m))) =0$ for $e \gg 0$, then $R_m \subseteq I^F$.
\end{lemma}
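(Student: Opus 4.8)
The plan is to re-run the cohomology-hopping argument of Lemma~\ref{tools-tightclosure} with the auxiliary twist set to zero (equivalently, with multiplier $\test=1$). The point of the \emph{strict} inequalities $m>\alpha_{j,i}$ is that they give $m-\alpha_{j,i}\geq 1$, so that after the $e$-th Frobenius pullback the twists $q(m-\alpha_{j,i})$ tend to infinity with $e$; hence the cohomology of $\O_Y$ that needs to vanish does so by Serre vanishing for $e\gg 0$, with no extra twist inserted.

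First I would fix an arbitrary $f\in R_m$ and, since $R$ is normal of dimension $\geq 2$, identify $R_t=H^0(Y,\O_Y(t))$ for all $t\geq 0$; let $c_j\in H^j(Y,\Syz_j(m))$, $j=1,\dots,\dip$, be the classes introduced before Lemma~\ref{CM-cohomologyhopping}. For $q=p^e$, take the $m$-twists of (\ref{two}) and (\ref{three}), apply ${F^e}^*$ (exact, the sheaves being locally free) using ${F^e}^*\O_Y(t)=\O_Y(qt)$, and pass to the long exact cohomology sequences. As in the proof of Lemma~\ref{tools-tightclosure}(b), naturality of the connecting maps under the Frobenius action yields $\delta^0(f^q)=c_1^q$ and $\delta^{j-1}(c_{j-1}^q)=c_j^q$ for $j=2,\dots,\dip$, where $f^q\in H^0(Y,\O_Y(qm))=R_{qm}$ is the Frobenius image of $f$.

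Next, Serre vanishing provides an $N$ with $H^i(Y,\O_Y(t))=0$ for all $1\leq i\leq \dip$ and all $t\geq N$; since $m-\alpha_{j,i}\geq 1$, as soon as $p^e\geq N$ we have $q(m-\alpha_{j,i})\geq q\geq N$, so $H^i(Y,\O_Y(q(m-\alpha_{j,i})))=0$ for all such $i,j$ and all $e\gg 0$. Feeding this into the cohomology sequences above shows, for $e\gg 0$: the kernel of $\delta^0\colon H^0(Y,\O_Y(qm))\to H^1(Y,{F^e}^*(\Syz_1(m)))$ equals the image of $\bigoplus_i R_{q(m-\alpha_{1,i})}$ under $(g_i)\mapsto\sum_i f_i^q g_i$, namely $I^{[q]}_{qm}$; and each $\delta^{j-1}\colon H^{j-1}(Y,{F^e}^*(\Syz_{j-1}(m)))\to H^j(Y,{F^e}^*(\Syz_j(m)))$ with $j=2,\dots,\dip$ is injective.

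Finally I would combine these. By hypothesis $H^{j_0}(Y,{F^e}^*(\Syz_{j_0}(m)))=0$ for $e\gg 0$, so $c_{j_0}^q=0$ for such $e$; descending along the injective maps $\delta^{j_0-1},\dots,\delta^1$ gives $c_{j_0}^q=0\Rightarrow\dots\Rightarrow c_1^q=0$ (vacuous when $j_0=1$). Then $\delta^0(f^q)=c_1^q=0$, so $f^q\in I^{[q]}_{qm}\subseteq I^{[q]}$ for all $e\gg 0$; in particular $f^q\in I^{[q]}$ for some $q=p^e$, i.e.\ $f\in I^F$, and since $f\in R_m$ was arbitrary, $R_m\subseteq I^F$. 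The one point needing care is the compatibility of the Frobenius action on cohomology with the connecting homomorphisms, but this is precisely the bookkeeping already carried out for Lemma~\ref{tools-tightclosure}; the genuinely new input is only the trivial remark that $m-\alpha_{j,i}\geq 1$ lets one reach the Serre bound by enlarging $e$ rather than by an auxiliary twist.
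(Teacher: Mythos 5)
Your argument is correct and follows essentially the same route as the paper's own proof: use the strict inequalities $m>\alpha_{j,i}$ so that $q(m-\alpha_{j,i})\geq q$ eventually surpasses the Serre-vanishing threshold without an auxiliary twist, then run the cohomology-hopping from Lemma~\ref{tools-tightclosure} with $\twistell'=0$ to get $c_{j_0}^q=0\Rightarrow c_1^q=0\Rightarrow f^q\in I^{[q]}$. You merely spell out explicitly what the paper abbreviates by ``the same method as in the proof of Lemma~\ref{tools-tightclosure}.''
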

\begin{proof}
Suppose that $H^{j_0}(Y, {F^e}^*(\Syz_j (m)))=0$ for $e \gg
0$. Choose $e$ big enough such that this cohomology group vanishes
and such that
$p^{e}(m-\alpha_{j,i})
\geq \twistell$ for all
$j,i$, where $\twistell$ is the number from Lemma 
\ref{cohomologyhopping-tight}.  Set $q=p^e$.
The first condition on $e$ implies that for any $f \in R_m$ and associated cohomology classes $c_j$, the class $c_{j_0}^q$ vanishes.  
The second condition implies that $H^j(Y, \O_Y(q(m-\alpha_{j,i})))=0$ for 
$j=1 \comdots \dip$ and $i=1 \comdots n_j$. 
With the same method as in the proof of Lemma~\ref{tools-tightclosure}, we conclude from the
cohomology sequences (\ref{cohomologysequencefromtwo}) and (\ref{cohomologysequencefromthree}), here with $\twistell'=0$,
that $f^q \in I^{[q]}$ for any $f \in R_m$. Hence $R_m \subseteq I^F$.
\end{proof}

\section{Semicontinuity argument}
\label{semicontinuity-section}

We want to show that an ideal inclusion in a polynomial ring $P$ in $\dip +1$
variables induces a tight closure inclusion for generic ideal generators in a standard-graded $k$-algebra $R$ of
dimension $\dip +1$. In this section we make precise what we mean by
generic and we describe a construction where we can apply the
semicontinuity theorem for projective morphisms. Note that $R$ is
not assumed to be normal in this section.

\newcommand{\varcoef}{{W}}
\newcommand{\numcoef}{{N}}
\newcommand{\dimpre}{{r}}
\newcommand{\subspaceprim}{{Z}}
\newcommand{\opennbh}{{U}}

\begin{definition} By a \emph{degree type} we shall mean an
$\nog$-tuple of natural numbers $(\degg_1 \comdots \degg_\nog)$. We say
that $f_1 \comdots f_\nog \in R$ are \emph{of degree type 
$(\degg_1 \comdots \degg_\nog)$} if
each $f_i$ is either zero or of degree $\degg_i$. 
\end{definition}

\begin{construction}
\label{construction}
Let $R$ be a standard-graded $k$-algebra of dimension $\dip +1$ and fix a degree type $(\degg_1, \ldots,
\degg_\nog)$, $\nog \geq \dip +1$. Let $R=k[y_1, \ldots,
y_\dimpre]/\mathfrak a$ be a homogeneous representation of $R$. For
a given degree
$\degg$ we may consider the element $G$ with indetermined
coefficients,
$$G= \sum_{\nu: \, \nu_1 \plusdots \nu_\dimpre = \degg} \varcoef_{\nu_1 \comdots
\nu_\dimpre} y_1^{\nu_1} \cdots y_\dimpre^{\nu_\dimpre} \, \in k[\varcoef_\nu] \otimes_k R \, .$$
For each $\degg_i$ we construct the corresponding indetermined
polynomial $G_i$ with new indeterminates. 
Let $\numcoef$ be the number of all indeterminates, and let $\mathbb A^\numcoef_k$ be the
corresponding affine space. As is well-known, the
$k$-rational points of $\mathbb A^\numcoef_k$ may be viewed as 
$N$-tuples of elements in $k$, so they parametrize 
all the possible ways to assign values in $k$ to the
indeterminates. Every such assignment yields concrete
polynomials $f_1 \comdots f_n \in R$ of degree type $(a_1 \comdots
a_{\nog})$. The condition ``the $f_i$ are all nonzero and generate an $R_+$-primary ideal'' defines an open, nonempty subset 
$\subspaceprim \subseteq \mathbb{A}^\numcoef_k$. An arbitrary not
necessarily closed point $z \in Z$ represents nonzero primary
elements $f_1 \komdots f_n \in \kappa(z) \otimes_k R$ of the given degree
type, and the generic point $\eta \in Z$ 
represents $G_1 \komdots G_n \in k(W_{i \nu}) \otimes_k R$.
\end{construction}

\begin{definition}
\label{genericdef}
We consider statements about objects which depend on choices of $f_1 \comdots f_\nog$ and hence on the parametrizing space $\subspaceprim$ (or $\mathbb
A^\numcoef_k$).

\begin{enumerate}
\item[(a)]
A statement will be said to hold for \emph{generic} ideal
generators if it holds for all rational points in a Zariski-open
non-empty subset of $\subspaceprim$.

\item[(b)]
A statement will be said to hold in the \emph{generic point} $\eta \in \subspaceprim$ if it holds
for the indetermined polynomials $G_1 \comdots G_\nog$ in
$k(\varcoef_{i \nu})
\otimes_k R$.

\item[(c)]
A statement will be said to hold
\emph{countably generically}
if it holds in the intersection of countably many non-empty open subsets of
$\subspaceprim$.
\end{enumerate}
\end{definition}

The statements we are most interested in are ``$R_m \subseteq
(f_1 \comdots f_\nog)^F $'' and ``$R_m \subseteq (f_1 \comdots f_\nog)^* $''.
As $k$ is an infinite field (we are assuming that $k$ is algebraically closed),
any non-empty Zariski-open subset of $\subspaceprim$ contains $k$-rational points. If $k$ is uncountably infinite, then the intersection of countably many open non-empty subsets of $\subspaceprim$ contains rational points (proof by induction on the dimension).
\newcommand{\pointsp}{{s}} 
\newcommand{\pointgen}{{z}} 

\begin{lemma}
\label{semicont}
Let $Y= \Proj R$ and consider the projection $X=Y \times_k
\subspaceprim
\rightarrow \subspaceprim$. Let $\firstsyzbig = \Syz_1(G_1 \comdots
G_\nog)$ be the first syzygy bundle on $X$ given by the indetermined
polynomials $G_i$ of the given degree type.
\begin{enumerate}
\item[(a)]
The sheaf $\firstsyzbig$ is locally free on $X$
and flat over $\subspaceprim$.

\item[(b)] Let $\pointgen \in \subspaceprim $ be a $k$-rational point with
corresponding ideal generators 
$f_1 \comdots f_\nog \in R$.
Then the fiber $(\firstsyzbig)_\pointgen$ over $Y=X_\pointgen$ is isomorphic
to 
$\Syz_1(f_1 \comdots f_\nog)$.

\item[(c)] If there exist
$f_1 \comdots f_\nog \in R$ such that
\[H^1(Y, {F^{e}}^* ( \Syz_1(f_1 \comdots f_\nog)(m)) \otimes \O(\twistell))=0\] ($e, \twistell, m$ fixed),
then this is true for generic choice of $f_1 \comdots f_\nog \in R$.
\end{enumerate}
\end{lemma}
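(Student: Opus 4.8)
The plan is to prove the three parts in order, as (a) and (b) set up the situation so that (c) becomes a direct application of Grauert's semicontinuity theorem. For part (a), I would first observe that the indetermined polynomials $G_i$ define a morphism of sheaves $\bigoplus_{i=1}^\nog \O_X(-\degg_i) \to \O_X$ on $X = Y \times_k \subspaceprim$, and that by the primality condition cutting out $\subspaceprim$ this is surjective on every fiber $X_z = Y$ (over each $z \in \subspaceprim$ the $f_i$ generate an $R_+$-primary ideal, hence $(f_1,\dots,f_\nog)$ has no zero on $Y$). Thus $\firstsyzbig = \Syz_1(G_1,\dots,G_\nog)$ is the kernel of a surjection of locally free sheaves, so it is locally free on $X$; and since forming the kernel of a fiberwise-surjective map of vector bundles commutes with base change, $\firstsyzbig$ is flat over $\subspaceprim$. (Concretely: the Koszul-type complex $0 \to \firstsyzbig \to \bigoplus \O_X(-\degg_i) \to \O_X \to 0$ is a short exact sequence of flat $\O_{\subspaceprim}$-modules, and its restriction to any fiber stays exact.)

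For part (b), restricting the short exact sequence $0 \to \firstsyzbig \to \bigoplus_i \O_X(-\degg_i) \to \O_X \to 0$ to the fiber $X_\pointgen = Y$ over a $k$-rational point $\pointgen$ yields the sequence $0 \to (\firstsyzbig)_\pointgen \to \bigoplus_i \O_Y(-\degg_i) \to \O_Y \to 0$, where by construction the right-hand map is exactly $(f_1,\dots,f_\nog)$ with $f_i$ the specialization of $G_i$ at $\pointgen$. Since this sequence stays exact by flatness (part (a)), $(\firstsyzbig)_\pointgen$ is by definition $\Syz_1(f_1,\dots,f_\nog)$.

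For part (c), I would apply the same reasoning not to $\firstsyzbig$ itself but to the sheaf $\shF := {F^e}^*(\firstsyzbig(m)) \otimes \O_X(\twistell)$ on $X$, where the Frobenius is the relative Frobenius of $X/\subspaceprim$ (equivalently, Frobenius on the $Y$-factor). This sheaf is again locally free on $X$ and flat over $\subspaceprim$ — twisting by line bundles and pulling back along the (flat, finite) relative Frobenius preserve these properties — and by the base-change compatibility its fiber over a $k$-rational point $\pointgen$ is ${F^e}^*(\Syz_1(f_1,\dots,f_\nog)(m)) \otimes \O_Y(\twistell)$, using part (b). The projection $X \to \subspaceprim$ is projective, so by the semicontinuity theorem \cite[Theorem III.12.8]{hartshornealgebraic} the function $\pointgen \mapsto \dim_{\kappa(\pointgen)} H^1(X_\pointgen, \shF_\pointgen)$ is upper semicontinuous on $\subspaceprim$. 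Hence the locus where $H^1(Y, {F^e}^*(\Syz_1(f_1,\dots,f_\nog)(m)) \otimes \O_Y(\twistell)) = 0$ is Zariski-open; if it is nonempty — which is exactly the hypothesis that some $f_1,\dots,f_\nog \in R$ achieves the vanishing — then it contains a dense open set of $k$-rational points, i.e.\ the vanishing holds for generic choice of $f_1,\dots,f_\nog$.

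The only slightly delicate point is the behaviour of cohomology under the relative Frobenius: one must make sure that the fiber of ${F^e}^*(\firstsyzbig(m))$ over $\pointgen$ really is ${F^e}^*$ of the fiber. This is where I expect the main (though still routine) care to be needed — it follows because the absolute Frobenius on $Y$ over a perfect field $k$ is compatible with base change to $\kappa(\pointgen)$, so the relative Frobenius of $X/\subspaceprim$ restricts on fibers to the absolute Frobenius of $Y$; one also uses flatness of Frobenius in the regular (or at least the relevant locally free) setting so that pullback along it commutes with the base change $X_\pointgen \hookrightarrow X$. Once this identification is in place, semicontinuity finishes the argument immediately.
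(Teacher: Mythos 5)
Your proposal is correct and follows the same route as the paper: for (a) and (b) you use the short exact sequence defining $\firstsyzbig$ as the kernel of the surjection $\bigoplus\O_X(-\degg_i)\to\O_X$ given by the $G_i$ (surjectivity from the primary condition), which gives local freeness, flatness over $\subspaceprim$, and compatibility with restriction to fibers; for (c) you invoke Hartshorne's semicontinuity theorem for the projective morphism $X\to\subspaceprim$. The paper simply states that (c) ``follows from the semicontinuity theorem and from (a) and (b)'' without addressing the Frobenius issue, so your extra paragraph on why the fiber of ${F^e}^*(\firstsyzbig(m))\otimes\O_X(\twistell)$ over a $k$-rational point $\pointgen$ is ${F^e}^*(\Syz_1(f_1\comdots f_\nog)(m))\otimes\O_Y(\twistell)$ is a welcome addition -- that compatibility is the one genuinely non-formal step, and you correctly identify that it rests on $k$ being perfect. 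A slightly cleaner alternative to the relative Frobenius is to use the $e$-th power of the absolute Frobenius $F_X$ on $X$: since $\pointgen$ is a $k$-rational point with $k$ perfect, $F_X$ preserves the closed fiber $X_\pointgen\cong Y$ and restricts to the absolute Frobenius of $Y$, while ${F_X^e}^*\firstsyzbig(m)\otimes\O_X(\twistell)$ is still locally free on $X$ and hence flat over $\subspaceprim$, so the same semicontinuity argument applies.
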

\begin{proof}
The syzygy bundle $\firstsyzbig$ is given by
$$
\xymatrix{
0 \ar[r] & \firstsyzbig \ar[r] &
\bigoplus_{i=1}^\nog \O_X(-\degg_i) \ar[r]^<<<<<{G_1 \comdots G_\nog} &
\O_X \ar[r] & 0 \, ,}
$$
where $\O_X(1)$ is the relative very ample sheaf on $X$
corresponding to the standard grading of $k[\varcoef_{\nu,i}]
\otimes_k R$
(the $\varcoef_{\nu,i}$ have degree zero) and where the $G_i$ define a
surjection because of the primary condition. Hence $\firstsyzbig$ is
locally free and therefore by \cite[Proposition
III.9.2]{hartshornealgebraic}, flat over
$X$ and over $\subspaceprim$, which proves part (a).
Moreover, this short exact sequence restricts to $X_z \cong Y$ to the
defining sequence of
$\Syz_1(f_1 \comdots f_\nog)$, which gives part (b).
Part (c) follows from the semicontinuity theorem \cite[Theorem
III.12.8]{hartshornealgebraic} and from (a) and (b).
\end{proof}

If a polynomial ring $P \subseteq R$ is given over which $R$ is finite,
then the condition $f_1 \comdots f_\nog \in P$ defines a closed subset $V \subseteq \subspaceprim$.
Here is a picture to illustrate the situation.

\psfrag{Syz_z}{$(\firstsyzbig)_\pointgen$}
\psfrag{Syz_z'}{$(\firstsyzbig)_{\pointsp}$}
\psfrag{Syz_underline}{$\firstsyzbig$}
\psfrag{Y}{$Y$}
\psfrag{z}{$\pointgen$}
\psfrag{z'}{$\pointsp$}
\psfrag{V}{$V$}
\psfrag{Z}{$Z$}
\psfrag{Xdef}{$X=Y \times_k Z$}
\includegraphics[scale=0.8]{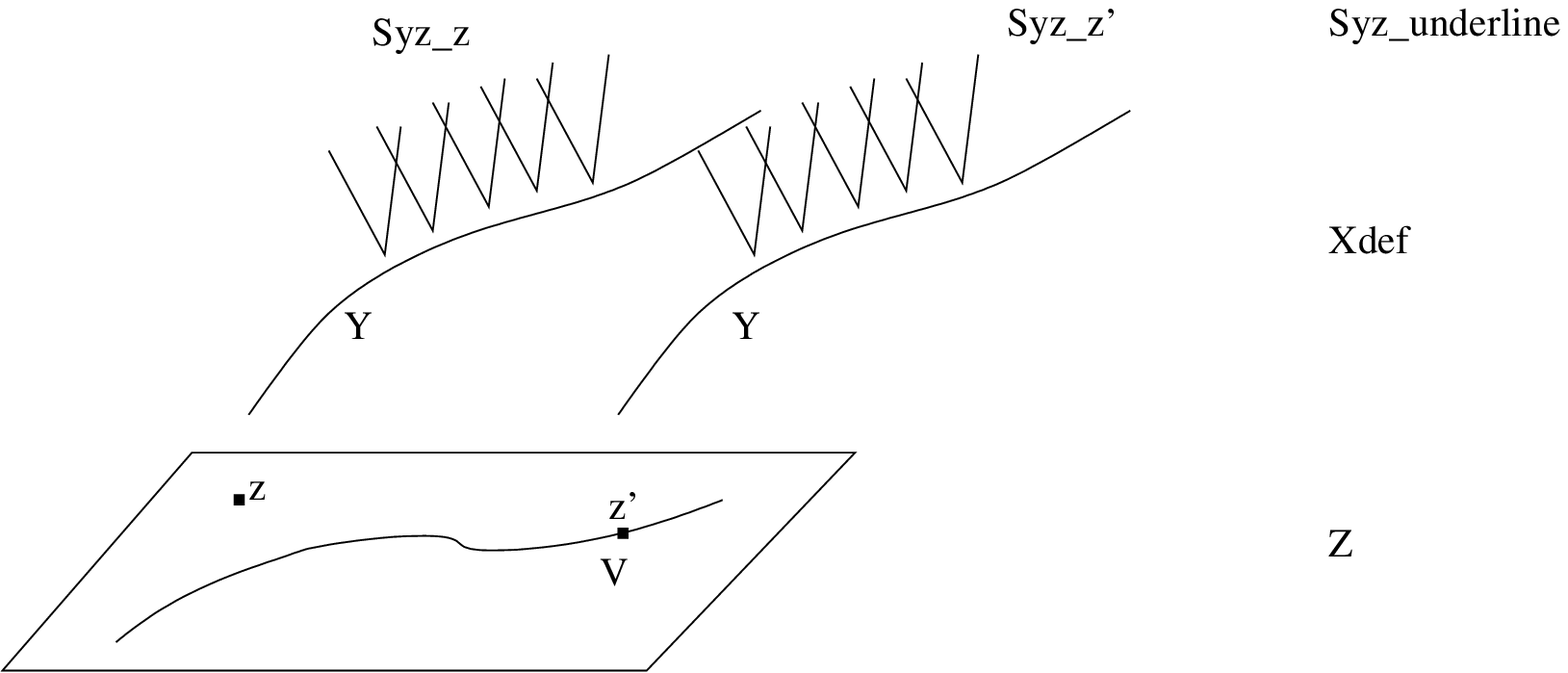}
\vskip2cm

In this picture, $(\firstsyzbig)_\pointsp$ and $(\firstsyzbig)_{\pointgen}$ are the
syzygy bundles on $Y$ associated to those ideal generators 
given by the points $\pointsp$ and $\pointgen \in \subspaceprim$ respectively. For points $\pointsp \in V$ we can use properties of the polynomial ring (e.g. that ideal resolutions are finite) to establish cohomological vanishing properties for the syzygy bundles.
These pass then over to an open neighborhood of $\pointsp$ inside
$\subspaceprim$.

\section{From ideal inclusion to generic bounds for tight closure and
Frobenius closure}
\label{ideal-to-tight}

Let $R$ be a standard-graded algebra over an algebraically closed  field $k$ of
characteristic $p>0$.
By the Noether normalization theorem \cite[Theorem 13.3]{eisenbud},
$R$ contains a polynomial ring
$P=k[x_0 \comdots x_{\dip}]$ such that $R$ is a finitely generated
$P$-module. For ideal generators $f_1 \comdots f_\nog \in R$ it is a
very special property to belong to $P \subseteq R$. In the following,
we will show that this property has strong consequences on the
containment in the ideal, its tight closure and its Frobenius closure
in $R$. These containments are characterized by cohomological
vanishing conditions which, by semicontinuity, pass over to generic
ideal generators not belonging to $P$.

\begin{remark} \label{topsyzsplitting}
Let $g_1 \comdots g_\nog \in P=k[x_0 \comdots x_{\dip}] \subseteq
R$ be homogeneous and $P_+$-primary. Consider the 
minimal free resolution of $P/(g_1 \comdots g_\nog)$
and the corresponding exact complex of sheaves (in the sense of (\ref{one})) on
$\Proj P = \PP^\dip$. This complex has length $\dip +1$, so it is of
the form
\begin{equation} \label{minrespolyring} 
0 \longrightarrow 
\bigoplus_{i=1}^{n_{\dip}}\O_{\PP^{\dip}}(-\alpha_{\dip,i}) \longrightarrow
\bigoplus_{i=1}^{n_{\dip-1}} \O_{\PP^{\dip}}(-\alpha_{\dip-1, i}) \longrightarrow \ldots
\longrightarrow \O_{\PP^{\dip}} \longrightarrow 0 \end{equation}
and the last syzygy bundle splits as
\begin{equation} \label{topsyzsplittingpolyring} \Syz_\dip =
\oplus_{i=1}^{n_\dip} \O_{\PP^\dip}(-\alpha_{\dip,i}). \end{equation}
Now consider the pullback of (\ref{minrespolyring}) 
via the finite morphism $Y=\Proj R \rightarrow \PP^{\dip}$,
\begin{equation} \label{minrespullback} 
0 \longrightarrow 
\bigoplus_{i=1}^{n_{\dip}}\O_Y(-\alpha_{\dip,i}) \longrightarrow
\bigoplus_{i=1}^{n_{\dip-1}} \O_Y(-\alpha_{\dip-1, i}) \longrightarrow \ldots
\longrightarrow \O_Y \longrightarrow 0 \end{equation}
This is an exact complex of sheaves on $Y$.
As on $\PP^{\dip}$, we have a splitting
\begin{equation} \label{topsyzsplittingR} 
\Syz_\dip = \oplus_{i=1}^{n_\dip} \O_{Y}(-\alpha_{\dip,i}).\end{equation}
\end{remark}

\begin{lemma} \label{alpha-estimate-lemma}
Consider the resolution (\ref{minrespolyring}) on $\PP^{\dip}$.
An inclusion $P_m \subseteq 
(g_1 \comdots g_\nog)$ gives an estimate for the Betti twists
$\alpha_{\dip,i}$, namely that
\begin{equation} \label{alpha-estimate} \alpha_{\dip,i} \leq m+\dip \, . \end{equation}
\end{lemma}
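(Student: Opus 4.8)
The plan is to identify the Betti twists $\alpha_{\dip,i}$ with the degrees of the minimal generators of the \emph{last} free module in the minimal graded free resolution of $P/(g_1 \comdots g_\nog)$, to read those degrees off from the socle of $P/(g_1 \comdots g_\nog)$, and then to use that the socle sits inside $P/(g_1 \comdots g_\nog)$, which the hypothesis $P_m \subseteq (g_1 \comdots g_\nog)$ forces to vanish in all degrees $\geq m$.

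Concretely, put $I=(g_1 \comdots g_\nog)$ and $M=P/I$. First I would record that $P_m \subseteq I$ gives $P_j \subseteq I$ for every $j \geq m$ (since $P_j=P_{j-m}P_m$), so $M$ is a nonzero Artinian graded $P$-module with $M_j=0$ for $j\geq m$; in particular $\operatorname{depth}_P M=0$, whence the Auslander--Buchsbaum formula \cite[Theorem~19.9]{eisenbud} gives $\operatorname{pd}_P M=\dip+1$, so the minimal graded free resolution reads
\[ 0 \to F_{\dip+1} \to \cdots \to F_1 \to F_0=P \to M \to 0 \]
and its last free module $F_{\dip+1}$ is the module $\bigoplus_{i=1}^{n_\dip} P(-\alpha_{\dip,i})$ of (\ref{minrespolyring}) (equivalently $\Syz_\dip$, by (\ref{topsyzsplittingpolyring})). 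Next I would compute its twists with the Koszul complex: resolving $k=P/P_+$ by the Koszul complex $K_\bullet$ on the regular sequence $x_0 \comdots x_\dip$ and tensoring with $M$, one has $\operatorname{Tor}_{\dip+1}^P(M,k)=H_{\dip+1}(K_\bullet \otimes_P M)$; the top term $K_{\dip+1}\otimes_P M$ is $M(-(\dip+1))$ and the top Koszul differential has kernel $(0:_M P_+)$, so
\[ \operatorname{Tor}_{\dip+1}^P(M,k)_j \;\cong\; (0:_M P_+)_{j-(\dip+1)} . \]
Since the resolution is minimal, $F_{\dip+1}=\bigoplus_j P(-j)^{\beta_{\dip+1,j}}$ with $\beta_{\dip+1,j}=\dim_k \operatorname{Tor}_{\dip+1}^P(M,k)_j$, so $\alpha_{\dip,i}=j$ occurs precisely for those $j$ with $(0:_M P_+)_{j-(\dip+1)}\neq 0$.

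To finish, observe $(0:_M P_+)_e \subseteq M_e$, which vanishes for $e\geq m$; hence any such $j$ satisfies $j-(\dip+1)\leq m-1$, i.e. $\alpha_{\dip,i}\leq m+\dip$, which is (\ref{alpha-estimate}). A quicker variant avoids the Koszul bookkeeping: $M$ is Artinian, so its Castelnuovo--Mumford regularity $\operatorname{reg}_P M$ equals the top nonzero degree of $M$ and is therefore $\leq m-1$, and the standard estimate $\beta_{p,j}(M)\neq 0 \Rightarrow j\leq \operatorname{reg}_P M+p$ applied with $p=\dip+1$ gives $\alpha_{\dip,i}\leq m+\dip$ directly. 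I do not expect a real obstacle here; the only point to get right is the homological index of the last term --- it is $\dip+1$, and the ``$+\dip$'' (rather than ``$+\dip+1$'') in the bound is exactly this index corrected by the fact that $P_m\subseteq I$ only forces $M$ to vanish from degree $m$ on, i.e. through degree $m-1$.
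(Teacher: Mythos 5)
Your proof is correct, but it takes a genuinely different route from the paper's. The paper argues by contradiction on the geometric side: supposing some $\alpha_{\dip,i} > m+\dip$, it sets $m' = \max_i\{\alpha_{\dip,i}\}-\dip-1 \geq m$, observes that $\Syz_\dip(m')$ then contains $\O_{\PP^\dip}(-\dip-1)$ as a summand so $H^\dip(\PP^\dip,\Syz_\dip(m')) \neq 0$, uses the strict increase of Betti twists \cite[Exercise 20.19]{eisenbud} to kill the adjacent cohomology term, and then hops down via the isomorphisms (\ref{CM-coh-isomorphisms}) to conclude $H^1(\PP^\dip,\Syz_1(m')) \neq 0$, contradicting $P_{m'} \subseteq (g_1 \comdots g_\nog)$. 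You instead stay entirely on the algebraic side: the Koszul computation $\operatorname{Tor}_{\dip+1}^P(M,k)_j \cong (0:_M P_+)_{j-(\dip+1)}$ identifies the Betti twists of $F_{\dip+1}=\Syz_\dip$ with (shifted) socle degrees of the Artinian module $M=P/I$, and the hypothesis $M_j=0$ for $j\geq m$ kills the socle in degrees $\geq m$; equivalently, one invokes $\operatorname{reg}_P M \leq m-1$ together with $\beta_{\dip+1,j}\neq 0 \Rightarrow j \leq \operatorname{reg}_P M + \dip+1$. Your indexing point at the end (the last free module sits in homological degree $\dip+1$ while the paper labels its twists $\alpha_{\dip,i}$) is exactly right and worth being explicit about, since the paper's indexing in (\ref{minrespolyring}) is a potential source of off-by-one confusion. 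The algebraic argument is more elementary and self-contained; the paper's cohomological argument fits the rest of the paper's machinery (sheaf cohomology on $Y$, cohomology hopping) and is essentially rehearsing the same technique that will be applied to $Y = \Proj R$ in Lemma~\ref{step2}, so it doubles as a warm-up in the special case $Y=\PP^\dip$.
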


\begin{proof}
Assume this is false. Then $m' := \max \{ \alpha_{\dip,i}\}-\dip -1  \geq
m $ and the $m'$-twist of the resolution gives $H^\dip (\PP^\dip,
\Syz_\dip (m'))\neq 0$, because the bundle contains $\O_{\PP^\dip}(-\dip -1)$ as a direct
summand. On the other hand, because of
$\max \{ \alpha_{\dip,i}\} >  \max \{ \alpha_{\dip-1,i}\}  $ (by \cite[Exercise 20.19]{eisenbud}), we have
$H^\dip(\PP^\dip,\oplus_{i=1}^{n_{\dip -1}} \O_{\PP^\dip} (m' - \alpha_{\dip-1,i} )) = 0.$ Moreover, the intermediate cohomology of any splitting sheaf vanishes on $\PP^\dip$.
Therefore the long exact cohomology sequence associated to the sequence
\[ 0 \lra \bigoplus_{i=1}^{n_\dip} \O_{\PP^\dip}
(m'-\alpha_{\dip,i})=\Syz_\dip(m') \lra
\bigoplus_{i=1}^{n_{\dip-1}} \O_{\PP^\dip} (m'-\alpha_{\dip-1,i}) \lra
\Syz_{\dip -1}(m') \lra 0 \]
ends with
\[0 \lra H^{\dip-1}(\PP^\dip , \Syz_{\dip-1}(m')) \lra H^\dip(\PP^\dip, \Syz_\dip(m')) \lra 0. \]
Together with the isomorphisms (\ref{CM-coh-isomorphisms}), we obtain
\[0 \neq H^\dip(\PP^\dip, \Syz_\dip(m')) \cong H^{\dip -1}(\PP^\dip,\Syz_{\dip -1}(m') ) \cong \ldots \cong H^{1}(\PP^\dip,\Syz_{1}(m') )\] 
 and hence $P_{m'} \not\subseteq (g_1 \comdots g_\nog)$ -- a contradiction.
\end{proof}

\begin{lemma}
\label{step2}
Let $g_1 \comdots g_n \in P=k[x_0 \comdots x_{\dip}]$, $\dip \geq 1$, be homogeneous and $P_+$-primary,
and let $m \in \NN$.
Suppose that $P_m \subseteq (g_1 \comdots g_\nog)$ holds in $P$. Let $P \subseteq R$ be a graded Noetherian normalization. Then the following cohomological properties hold on $Y= \Proj R$.
\begin{enumerate}
\item[(a)] There exists $\twistell$ such that
\[H^1(Y,\Syz_1(g_1^q \komdots g_\nog^q) ( q(m+ \dip) + \twistell') ) =0\] holds for $q=p^e$, $e \geq 0$, and all $\twistell' \geq \twistell$.
\item[(b)] The cohomology module
\[H^1(Y,\Syz_1(g_1 \komdots g_\nog)(m+ \dip + 1)) \]
is annihilated by some Frobenius power.

\item[(c)] If $R$ is Cohen-Macaulay with $a$-invariant $\ainv$, then
\[H^1(Y,\Syz_1(g_1 \komdots g_\nog)(m+ \dip + 1 + \ainv)) = 0 \, .\]
\end{enumerate}
\end{lemma}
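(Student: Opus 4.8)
The plan is to prove (a), then read off (b), and obtain (c) from the same argument. Everything rests on three facts already in place: the splitting $\Syz_\dip(g_1\komdots g_\nog)\cong\bigoplus_{i=1}^{n_\dip}\O_Y(-\alpha_{\dip,i})$ of the pulled-back top syzygy bundle ((\ref{topsyzsplittingR})), the Betti bound $\alpha_{\dip,i}\le m+\dip$ (Lemma \ref{alpha-estimate-lemma}), and the cohomology hopping of Lemma \ref{cohomologyhopping-tight}(a). Two preliminaries: since the maximal Betti twist strictly increases with homological degree (\cite[Exercise 20.19]{eisenbud}, as used in the proof of Lemma \ref{alpha-estimate-lemma}), one in fact has $\alpha_{j,i}\le m+\dip$ for \emph{all} $j=1\komdots\dip$, $i=1\komdots n_j$; and the complex underlying (\ref{minrespullback}) is a homogeneous free complex over $R$, exact on $D(R_+)$ (being the pullback along $Y\to\PP^\dip$ of a complex of locally free sheaves exact off the irrelevant locus), resolving the $R_+$-primary ideal $(g_1\komdots g_\nog)R$; so Lemma \ref{cohomologyhopping-tight}(a) applies to it with $\Syz_j=\Syz_j(g_1\komdots g_\nog)$ and with the parameter of that lemma taken to be $m+\dip$ (legitimate, as $m+\dip\ge\alpha_{j,i}$). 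Only exactness of the locally free sequences is used, so normality of $R$ plays no role here.

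For (a), fix $\twistell\ge 0$ with $H^j(Y,\O_Y(\twistell'))=0$ for $1\le j\le\dip$ and all $\twistell'\ge\twistell$ (Serre vanishing). Frobenius pullback is exact on the locally free sequence (\ref{two}) and sends $(g_1\komdots g_\nog)$ to $(g_1^q\komdots g_\nog^q)$ and $\O_Y(n)$ to $\O_Y(qn)$, whence $\Syz_1(g_1^q\komdots g_\nog^q)\cong F^{e*}(\Syz_1(g_1\komdots g_\nog))$ and $F^{e*}(\Syz_\dip(g_1\komdots g_\nog))\cong\bigoplus_i\O_Y(-q\alpha_{\dip,i})$. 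For any $e$ and any $\twistell'\ge\twistell$, Lemma \ref{cohomologyhopping-tight}(a) gives isomorphisms $H^{j-1}(F^{e*}(\Syz_{j-1}(m+\dip))(\twistell'))\cong H^{j}(F^{e*}(\Syz_{j}(m+\dip))(\twistell'))$ for $j=2\komdots\dip-1$ and an injection at $j=\dip$; but the $j=\dip$ target equals $\bigoplus_i H^\dip(Y,\O_Y(q(m+\dip-\alpha_{\dip,i})+\twistell'))$, and each twist there is $\ge\twistell'\ge\twistell$ since $m+\dip-\alpha_{\dip,i}\ge 0$, so it vanishes. The chain of (iso)morphisms then forces $H^1(F^{e*}(\Syz_1(m+\dip))(\twistell'))=0$, i.e.\ $H^1(Y,\Syz_1(g_1^q\komdots g_\nog^q)(q(m+\dip)+\twistell'))=0$ for all $q=p^e$ and $\twistell'\ge\twistell$. (For $\dip=1$, $\Syz_1=\Syz_\dip$ is already a sum of $\O_Y$ with non-negative twists, so this is immediate; for $\dip=2$ only the injection is used.)

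Part (b) is the case $\twistell'=q$ of (a), for $e$ with $q=p^e\ge\twistell$: it yields $H^1(Y,\Syz_1(g_1^q\komdots g_\nog^q)(q(m+\dip+1)))=0$, and since $\Syz_1(g_1^q\komdots g_\nog^q)(q(m+\dip+1))=F^{e*}(\Syz_1(g_1\komdots g_\nog)(m+\dip+1))$, the $e$-th Frobenius power annihilates $H^1(Y,\Syz_1(g_1\komdots g_\nog)(m+\dip+1))$. For (c), with $R$ Cohen--Macaulay of dimension $\dip+1$ and $a$-invariant $\ainv$ one has $H^j(Y,\O_Y(n))=0$ for $1\le j\le\dip-1$ and all $n$, while $H^\dip(Y,\O_Y(n))=0$ for $n>\ainv$; running the hopping of Lemma \ref{cohomologyhopping-tight}(a) at $e=0$ on the $N$-twists of (\ref{two}) and (\ref{three}) with $N:=m+\dip+1+\ainv$, the middle terms $\bigoplus_i\O_Y(N-\alpha_{j,i})$ have no intermediate cohomology, while $H^\dip(\Syz_\dip(N))=\bigoplus_i H^\dip(\O_Y(N-\alpha_{\dip,i}))=0$ because $N-\alpha_{\dip,i}\ge\ainv+1>\ainv$; so the chain forces $H^1(Y,\Syz_1(g_1\komdots g_\nog)(m+\dip+1+\ainv))=0$.

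I expect no serious obstacle: the content is the twist bookkeeping that keeps every $\O_Y$-summand occurring in the hopping inside the Serre-vanishing (resp.\ $a$-invariant) range. The one step worth isolating is the mechanism of (b): a Frobenius twist by $q=p^e$ by itself pushes each relevant summand $\O_Y(q(m+\dip+1-\alpha_{j,i}))$, with $m+\dip+1-\alpha_{j,i}\ge 1$, into that range, so the class dies outright under $F^{e*}$ and no multiplier $\test$ is needed --- which is exactly what the Frobenius-closure conclusion (as opposed to a tight-closure conclusion) asks for.
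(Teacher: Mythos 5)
Your proof is correct and follows essentially the same approach as the paper's: establish vanishing of the top cohomology using the splitting (\ref{topsyzsplittingR}) and the Betti bound of Lemma \ref{alpha-estimate-lemma}, then hop down to $H^1$ via the cohomology-hopping maps of Lemma \ref{cohomologyhopping-tight}(a), with the $a$-invariant supplying the required vanishing in part (c). Your derivation of (b) as the case $\twistell'=q$ of (a) is a tidy streamlining --- the paper instead re-runs the top-cohomology computation from scratch with $\twistell'=0$ and $e\ge e_0$, but the underlying twist bookkeeping is identical.
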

\begin{proof}
(a). Let $\tilde m := m+\dip$. We compare $\tilde m$ to the Betti twists $\alpha_{j,i}$ in the resolution (\ref{minrespolyring}): 
By Lemma~\ref{alpha-estimate-lemma},
we have 
\[\tilde m = m+\dip \geq \max_i \{ \alpha_{\dip,i} \} \geq \alpha_{j,i}\; \text{for
all}\; j \in \{1 \comdots \dip\}, i \in \{1 \comdots n_j\}.\]
Let $\twistell$ be as in Lemma~\ref{cohomologyhopping-tight}, i.e. such that $\twistell \geq 0$ and $H^j(Y, \O_Y(\twistell'))=0$ for $j=1 \comdots \dip$ and for all $\twistell' \geq \ell$. Then for
all $e \in \NN$ and all $\twistell' \geq \twistell$ we have

\begin{multline} \label{topcohvanishingtight} 
H^{\dip}(Y, F^{e*} (\Syz_{\dip}(\tilde m)) \otimes
\mathcal{O}(\twistell'))
= H^{\dip}(Y, \bigoplus_{i=1}^{n_{\dip}}
\O_Y(q(\tilde m-\alpha_{\dip,i})+\twistell') \\ 
=\bigoplus_{i=1}^{n_{\dip}} H^{\dip}(Y,\O_Y(q(\tilde m-\alpha_{\dip,i})+\twistell') =0 \; \text{(where}\; q=p^e \, \text{).} \end{multline}

Here the first equality follows from (\ref{topsyzsplittingR}). Since
$q=p^e>0$ and $\tilde m-\alpha_{\dip,i}\geq 0$, each twist appearing
in the direct sum expression is $\geq \twistell$, so the last equality
-- the vanishing of
the $\dip$-th cohomology -- follows from the assumption on
$\twistell$. By Lemma~\ref{tools-tightclosure}(c) (applied for $j=\dip$ and $\twistell_0=\twistell$), this implies part~(a) of the lemma.

(b). Now let $\hat m:=m+\dip+1$. Then $\hat m - \alpha_{j,i} >0$ for all $j,i$.
Let $\twistell$ be as above, and
let $e_0 \in \NN$ be large enough such that $p^{e_0} \geq \twistell$. 
Then for all $e \geq e_0$, we have
\begin{multline} \label{topcohvanishingFrob} 
H^{\dip}(Y, F^{e*} (\Syz_{\dip}(\hat m))) 
 = H^{\dip}(Y,\bigoplus_{i=1}^{n_{\dip}}
\O_Y(q(\hat m-\alpha_{\dip,i})) \\
=\bigoplus_{i=1}^{n_{\dip}} H^{\dip}(Y,\O_Y(q(\hat m-\alpha_{\dip,i})) =0 \, . \end{multline}

Since $q=p^e \geq p^{e_0}\geq \twistell$ and $\hat m-\alpha_{\dip,i}\geq 1$, each twist appearing
in the direct sum expression is $\geq\twistell$, so the last equality follows from the assumption on~$\twistell$. Since $H^j(Y, \O_Y(q( \hat m - \alpha_{j,i})))=0$ for all
$i,j$, the result follows from the cohomology sequences 
(\ref{cohomologysequencefromtwo}) and (\ref{cohomologysequencefromthree})  
with $\twistell'=0$.

(c). Recall that the $a$-invariant 
\cite[Section 3.6]{brunsherzog} of a graded ring $R$ is given by
$$\ainv
= \max \{\ell:\,H^{\dip+1}_{R_+}(R_\ell) \neq 0\}
= \max \{\ell:\, H^{\dip}(Y, \O_Y(\ell)) \neq 0 \} \, .$$ 
Now let $m' \geq m + \dip+1 + \ainv$. Then for $i \in \{ 1 \comdots n_{\dip} \}$, we have
\[ m' - \alpha_{\dip,i} \geq m+\dip+1 + \ainv - \alpha_{\dip,i} \geq \ainv +1\]
and hence $H^{\dip}(Y,\O_Y(m'-\alpha_{\dip,i}))=0$. It follows that
\begin{equation} \label{topcohvanishingCM} H^{\dip}(Y,\Syz_\dip(m')) = \bigoplus_{i=1}^{n_\dip} H^{\dip} 
(Y,\O_Y(m'- \alpha_{\dip,i})) =0. \end{equation}
Hence $c_\dip=0$ for the cohomology class corresponding to an element
$f \in R_{m'}$.
Because $R$ is Cohen-Macaulay, Lemma~\ref{CM-cohomologyhopping} gives the result and that $f \in (g_1 \comdots g_\nog)$.
\end{proof}

In Lemma \ref{step2} we can also deduce the containments $R_{m+\dip} \subseteq (g_1 \comdots g_\nog)^*$,
$R_{m+ \dip +1} \subseteq (g_1 \comdots g_\nog)^F $ (if $R$ is normal) and $R_{m + \dip +1+\ainv} \subseteq (g_1 \comdots g_\nog)$ (if $R$ is Cohen-Macaulay). We are however more interested in properties of the first cohomology module of the first syzygy bundle, because we want to apply semicontinuity to get these containments for general elements $(f_1 \komdots f_\nog)$, not only for the $(g_1 \comdots g_\nog)$ (we can not apply semicontinuity to the top-dimensional syzygy bundle, because this does not not dependend in  a flat way on the parameter space). We come to the main theorem of this paper. In short it says that an ideal inclusion for the polynomial ring yields a generic degree bound for tight closure and for Frobenius closure in every standard-graded algebra.

\renewcommand{\pointsp}{{s}}
\newcommand{\pointsec}{{z}}

\begin{theorem}
\label{maintheorem} Let $\dip \geq 1$. Let $\degg_1, \ldots, \degg_\nog$ be natural numbers, $\nog \geq \dip +1$. Let $m \in \NN$. Suppose that there exist $g_1 \comdots g_\nog \in P=k[x_0, \ldots, x_{\dip}]$, $\deg(g_i)=\degg_i$, such that $P_m \subseteq (g_1 \comdots g_\nog)$. Let $R$
be a $(\dip +1)$-dimensional standard-graded $k$-algebra with a graded Noether normalization $P \subseteq R$. Let $f_1 \comdots f_\nog \in R$ denote elements of degree type $(\degg_1 \comdots \degg_\nog)$.

\begin{enumerate}
\item[(a)] The containment
\[ R_{m+\dip} \subseteq (f_1 \comdots f_\nog)^* \]
holds in the generic point of $Z$ and countably generically 
(in the sense of Definition~\ref{genericdef}(b),(c)).

\item[(b)]
If $R$ is normal, then for generic elements $f_1 \comdots f_\nog \in R$ of the given degree type, we have
$R_{m+\dip +1} \subseteq (f_1 \comdots f_\nog)^F$.

\item[(c)]
If $R$ is Cohen-Macaulay with $a$-invariant $\ainv$ and of dimension $\geq 2$,
then
$R_{m+\dip +1 + \ainv } \subseteq (f_1 \comdots f_\nog)$
for generic elements of the given degree type.
\end{enumerate}
\end{theorem}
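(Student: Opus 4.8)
The plan is to reduce Theorem~\ref{maintheorem} to the special generators $g_1\comdots g_\nog\in P\subseteq R$ treated in Lemma~\ref{step2}, and then to spread the relevant cohomological vanishing out to generic $f_1\comdots f_\nog$ by the semicontinuity machinery of Section~\ref{semicontinuity-section}. Throughout, write $Y=\Proj R$ and $\tilde m := m+\dip$; note that the hypothesis $P_m\subseteq(g_1\comdots g_\nog)$ together with Lemma~\ref{alpha-estimate-lemma} gives $\alpha_{\dip,i}\le m+\dip$, so the assumptions of Lemma~\ref{tools-tightclosure} are satisfied for this $\tilde m$. We may also assume $k$ is algebraically closed: a standard-graded $k$-algebra over an arbitrary $k$ can be base-changed up, and the formation of $(f_1\comdots f_\nog)^*$, $(f_1\comdots f_\nog)^F$, the ideal membership, and the cohomology groups all behave well enough under this flat base change; alternatively one simply works over $\bar k$ from the start as in Section~\ref{prelim}. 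The genericity notions of Definition~\ref{genericdef} refer to the parametrizing space $Z$ of Construction~\ref{construction}, and the locus $V\subseteq Z$ of tuples lying in $P$ is a nonempty closed subset containing the point corresponding to $(g_1\comdots g_\nog)$.

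For part~(a): by Lemma~\ref{step2}(a) there is a twist $\twistell$ (with $\twistell\ge 0$ and $H^j(Y,\O_Y(\twistell'))=0$ for $1\le j\le\dip$, $\twistell'\ge\twistell$) so that, for the special tuple $g_1\comdots g_\nog$,
\[ H^1\bigl(Y,F^{e*}(\Syz_1(g_1\comdots g_\nog)(\tilde m))\otimes\O_Y(\twistell')\bigr)=0 \]
for all $e\ge 0$ and all $\twistell'\ge\twistell$. Now I fix one auxiliary value $\twistell'\ge\twistell$ with a nonzerodivisor $\test\in R_{\twistell'}$ (not in any minimal prime), the same device used in the proof of Lemma~\ref{tools-tightclosure}(c). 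For each fixed $e$, Lemma~\ref{semicont}(c) (applied to the syzygy bundle of the indetermined polynomials $G_1\comdots G_\nog$, which is flat over $Z$ by Lemma~\ref{semicont}(a)) shows that the vanishing of $H^1(Y,F^{e*}(\Syz_1(f_1\comdots f_\nog)(\tilde m))\otimes\O_Y(\twistell'))$ holds on a Zariski-open nonempty $U_e\subseteq Z$; here one must note that Frobenius pullback of the relative syzygy bundle still restricts compatibly to fibers, since $F$ on $Y\times_k Z$ is compatible with the absolute Frobenius on each fiber $Y$ over the corresponding residue field, so that $F^{e*}$ of the fiberwise bundle is again a fiberwise-bundle statement to which semicontinuity applies. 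Over the generic point $\eta$ of $Z$, this vanishing therefore holds for every $e$ (as $\eta$ lies in every nonempty open), giving $H^1(Y_\eta, F^{e*}(\Syz_1(G_1\comdots G_\nog)(\tilde m))\otimes\O(\twistell'))=0$ for all $e$; then Lemma~\ref{tools-tightclosure}(b),(c) applied over $\kappa(\eta)\otimes_k R$ yields $R_{\tilde m}\subseteq(G_1\comdots G_\nog)^*$, i.e.\ the statement in the generic point. For the countably generic statement, intersect the countably many opens $U_e$ ($e\in\NN$): on $U:=\bigcap_e U_e$ the vanishing holds for all $e$ and all $\twistell'\ge\twistell$ simultaneously, hence by Lemma~\ref{tools-tightclosure}(c) (with $j=1$, $\twistell_0=\twistell$) every rational point of $U$ gives $R_{\tilde m}\subseteq(f_1\comdots f_\nog)^*$; since $k$ is uncountably algebraically closed (or by passing to such an extension) $U$ has rational points.

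For part~(b): assume $R$ normal. By Lemma~\ref{step2}(b), for the tuple $g_1\comdots g_\nog$ the module $H^1(Y,\Syz_1(g_1\comdots g_\nog)(m+\dip+1))$ is killed by some Frobenius power, equivalently $H^1(Y,F^{e_0*}(\Syz_1(g_1\comdots g_\nog)(m+\dip+1)))=0$ for some single $e_0$ (and hence for all $e\ge e_0$); note that for normal $R$, $\O_Y(\twistell)$ is locally free and $R_\twistell=H^0(Y,\O_Y(\twistell))$ in the relevant degrees, which is what lets Lemma~\ref{tools-Frobeniusclosure} and Lemma~\ref{semicont} apply. Fixing such an $e_0$, Lemma~\ref{semicont}(c) (with $\twistell=0$, $e=e_0$, the twist being $m+\dip+1$) shows that $H^1(Y,F^{e_0*}(\Syz_1(f_1\comdots f_\nog)(m+\dip+1)))=0$ on a Zariski-open nonempty subset $U\subseteq Z$, and by Lemma~\ref{tools-Frobeniusclosure} (applicable since $m+\dip+1>\alpha_{j,i}$ for all $j,i$) every rational point of $U$ satisfies $R_{m+\dip+1}\subseteq(f_1\comdots f_\nog)^F$; this is the asserted generic statement. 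Finally, part~(c): assume $R$ Cohen--Macaulay with $a$-invariant $\ainv$ and $\dim R\ge 2$. Lemma~\ref{step2}(c) gives $H^1(Y,\Syz_1(g_1\comdots g_\nog)(m+\dip+1+\ainv))=0$ for the special tuple; applying Lemma~\ref{semicont}(c) once more (this time the cohomology vanishing is of the untwisted sheaf, $e=0$, the twist being $m+\dip+1+\ainv$) spreads this to a nonempty open $U\subseteq Z$, and on that open, by Lemma~\ref{CM-cohomologyhopping}, vanishing of $c_1$ for every $f\in R_{m+\dip+1+\ainv}$ forces $R_{m+\dip+1+\ainv}\subseteq(f_1\comdots f_\nog)$.

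The main obstacle I anticipate is not in the algebra but in the bookkeeping of \emph{uniformity in $e$}: semicontinuity is applied one Frobenius twist at a time, so for the generic-point and countably-generic conclusions of~(a) I must carefully organize the countable family of opens $\{U_e\}$, verify that Frobenius pullback of the flat relative bundle $\firstsyzbig$ indeed commutes with restriction to fibers over possibly non-closed points (so that the fiber of $F^{e*}\firstsyzbig$ at $z$ really is $F^{e*}$ of the fiber, using that the absolute Frobenius on $Y\times_k Z$ restricts to the absolute Frobenius on each fiber $Y\times_k\kappa(z)$), and confirm that the auxiliary multiplier $\test$ and the threshold $\twistell$ produced by Lemma~\ref{step2} may be chosen once and for all independently of $z$. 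Granting these compatibilities — all of which follow from the functoriality of Frobenius and the flatness in Lemma~\ref{semicont}(a) together with the fact that $\twistell$ depends only on $R$ and the degree type — the theorem follows by assembling Lemmas~\ref{step2}, \ref{semicont}, \ref{tools-tightclosure}, \ref{tools-Frobeniusclosure} and \ref{CM-cohomologyhopping} as above.
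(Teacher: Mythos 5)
Your proposal follows essentially the same strategy as the paper's proof: reduce to the special tuple $g_1,\ldots,g_n\in P$ via Lemma~\ref{step2}, spread the cohomological vanishings to generic tuples by the semicontinuity machinery of Lemma~\ref{semicont}, and conclude via the cohomological criteria of Section~\ref{prelim}; parts (b) and (c) are handled just as in the paper, and your remarks about uniformity in $e$ and the compatibility of Frobenius with restriction to fibers address the right technical points.

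There is, however, a genuine gap in your treatment of part~(a). The hypotheses under which Lemma~\ref{tools-tightclosure} is formulated (Section~\ref{prelim}) include that $R$ is \emph{normal} --- this is what allows the identification $R_m = H^0(Y,\O_Y(m))$ and hence turns a cohomological vanishing into a statement about elements of $R$. But Theorem~\ref{maintheorem}(a) imposes no normality hypothesis on $R$, and Construction~\ref{construction} explicitly avoids it. Your argument simply invokes Lemma~\ref{tools-tightclosure}(b),(c) for $R$ (and for $\kappa(\eta)\otimes_k R$) as if its hypotheses were met. The paper closes this gap in three steps: the cohomological argument (from the \emph{proof}, not the statement, of Lemma~\ref{tools-tightclosure}) shows $R_{\tilde m}\subseteq(f_1\comdots f_\nog)^*$ as a containment inside $\Gamma(D(R_+),\O)$; this section ring sits inside the normalization $R^{\rm norm}$, so one obtains the containment in the normalization; finally tight closure can be computed after passing to the normalization (\cite[Theorem~1.7]{hunekeapplication}), which pulls the containment back down to $R$. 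Without this detour through the normalization, the cohomological criterion does not directly give the tight-closure membership in a non-normal $R$, so your proof as written does not establish part~(a) in the stated generality.
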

\begin{proof}
Consider the indetermined polynomials $G_1 \comdots G_\nog$, the parametrizing space $ \subspaceprim $ and the vector bundle $\firstsyzbig=\Syz_1(G_1 \comdots G_{\nog})$ on $X= Y \times_k \subspaceprim$ from Construction~\ref{construction}. Choose $g_1 \comdots g_\nog \in P$ such that $\deg(g_i)=\degg_i$ and $P_m \subseteq (g_1 \comdots g_\nog)$. This special choice of ideal generators is represented by a point $\pointsp$ in the parametrizing space $\subspaceprim $ such that $ (\firstsyzbig)_\pointsp =\Syz_1(g_1 \comdots g_{\nog})$.

(a). Let $\tilde m:=m+\dip$. By Lemma~\ref{step2}(a) we have
\[H^1(Y, F^{e*}(\Syz_1(g_1 \comdots g_\nog)(\tilde m)) \otimes \O_Y(\twistell')) =0\; \text{for}\; e \geq 0 \; \text{and} \; \twistell'\geq \twistell .\]
Fix $\twistell'\geq \twistell$ such that there exists $\test \in R_{\twistell'} \ra \Gamma(Y, \O_Y(\twistell'))$, $\test$ not in any minimal prime. Since $F^{e*}(\Syz_1(g_1 \comdots g_\nog)(\tilde{m})) \otimes \O_Y(\twistell') \cong (F^{e*}
(\firstsyzbig(\tilde{m})) \otimes \O_X(\twistell'))_\pointsp$,
we can apply Lemma \ref{semicont} and deduce that for fixed $e \geq 0$, this vanishing
holds for all points $\pointsec \in \opennbh_e$ in an open
neighborhood $ \opennbh_e \subseteq \subspaceprim$ of $\pointsp$.
Hence the vanishing condition for all $e \geq 0$ holds 
for all points in the countable
intersection $\bigcap_e \opennbh_e$. For the tuples $(f_1 \comdots f_\nog)$ represented by $k$-rational points in this intersection it follows from the proof of Lemma \ref{tools-tightclosure} that
$R_{\tilde{m}} \subseteq (f_1 \comdots f_\nog )^*$ in $\Gamma(D(R_+), \O)$.
Since this ring is inside the normalization of $R$, it follows that $R_{\tilde{m}} \subseteq (f_1 \comdots f_\nog)^*$ in the normalization. But tight closure can be computed in the
normalization anyway \cite[Theorem~1.7]{hunekeapplication}. Hence
$R_{\tilde{m}} \subseteq (f_1 \comdots f_\nog)^*$ holds countably generically
in the sense of Definition~\ref{genericdef}(c).

Since this intersection contains the generic point $\eta \in \subspaceprim$,
we infer again that the cohomological vanishing property holds and so again
by Lemma \ref{tools-tightclosure} (now over the field $k(\varcoef_{\nu,i})$) we get the containment.

(b). For the Frobenius containment let $\hat{m} > m + \dip +1$. By Lemma \ref{step2}(b) we know that the first cohomology of $\Syz_1(g_1 \comdots g_\nog)(\hat{m}) \cong (\firstsyzbig)_\pointsp(\hat{m})$ is annihilated by some Frobenius power. Hence by Lemma \ref{semicont} the same is true for $(\firstsyzbig)_\pointsec (\hat{m})$ for all points $\pointsec$ in an open neighborhood $\opennbh $ of
$\pointsp$. By Lemma \ref{tools-Frobeniusclosure} it follows for
these points that $ R_{\hat{m}} \subseteq (f_1 \comdots
f_\nog)^F$. Hence this containment holds for generic choice of
$f_1 \comdots f_\nog$.

(c). For the ideal containment we know by Lemma \ref{step2}(c) that \[H^1(Y, \Syz_1(g_1 \comdots g_\nog)(m'))=H^1(X_\pointsp, (\firstsyzbig)_\pointsp(m')) = 0 \] for $m' \geq m+\dip +1 + \ainv$.
By semicontinuity we get $H^1(X_\pointsec, \firstsyzbig_{\pointsec} (m'))  = 0$
for generic $\pointsec \in \subspaceprim$ and this means (by Lemma~\ref{CM-cohomologyhopping}) that $R_{ m'} \subseteq (f_1 \comdots f_\nog)$ for the ideal generators given by generic $\pointsec \in \subspaceprim$.
\end{proof}

Note that the countable intersection contains $k$-rational points, e.g. the point $\pointsp$ which represents $g_1 \komdots g_n$. Note also that in statement (c) for ideal inclusion, the bound depends on an invariant of the ring, whereas the bounds for tight closure and Frobenius closure does not.

\newcommand{\boundk}{{\gamma}}
\newcommand{\coelin}{{\beta}}

\begin{remark}[Analogues for zero- and one-dimensional rings]
\begin{enumerate}
\item[(a)] Let $k$ be a field of positive characteristic, and let
$R$ be a standard-graded $k$-algebra of dimension zero. Then we have
$I^*=I^F=R_+$ for every ideal $I
\subseteq R_+$.
\item[(b)] Let $R$ be a standard-graded algebra of dimension
one over any field~$k$, and let $f$ be a generic element of degree
$\degg$ of $R$. Then there exists $\boundk \in \NN$ such that for all $\coelin
\in \NN$, we have $R_{\coelin \degg+\boundk} \subseteq (f^\coelin)$. 
\item[(c)] Let $R$ be a standard-graded algebra of
dimension one over a field~$k$ of positive characteristic.
Let $f$ be a generic element of $R$ of degree~$\degg$, and 
let $I:=(f) \subseteq R$.
Then $R_{\degg+1} \subseteq I^F$ and $R_{\degg} \subseteq I^*$.
\end{enumerate}
\end{remark}
\begin{proof}
(a) By definition, every prime ideal in a zero-dimensional ring $R$ is
maximal.  But the only maximal ideal of $R$ is the irrelevant ideal
$R_+$, since $R_+$ is the only homogeneous maximal ideal and every
maximal ideal $\mathfrak{m}$ 
is equal to the (prime) ideal generated by the homogeneous 
elements of $\mathfrak{m}$. Hence every $f \in R_+$ is nilpotent and
we obtain the desired result.

(b) Write $R$ as a quotient of a polynomial ring $k[x_0
\comdots x_r]$. From an elementary
calculation starting with integral equations for the $x_i$ over the
subring $k[f]$, one obtains the desired result.

(c) Let $g \in R_{\degg+1}$. Choose $e \in \NN$ such that $q=p^e \geq\boundk$ with the
$\boundk$ from part~(b). Then $g^q$
has degree $q(\degg+1)=q\degg+q \geq q \degg+\boundk$. By part~(b), we have
$g \in (f^q) = I^{[q]}$ and hence $g \in I^F$.

Now let $g \in R_{\degg}$. Let $\test \in R$ be of degree $\geq \boundk$, $\test
\notin$ minimal prime. Then
for $q=p^e> 0$, $\deg (\test g^q) \geq \boundk +q \degg$. By part~(b),
this implies $\test g^q \in I^{[q]}$ for all $q > 0$ and thus $g \in
I^*$.
\end{proof}

\begin{remark}
\label{parameter}
Suppose that $\nog= \dip +1$. This is the parameter case (a generic
choice always yields parameters),
and the Koszul resolution yields $\Syz_\dip = \O(- \sum_{i=1}^\nog
\degg_i)$. Hence on the polynomial ring we have $P_{ \geq m}
\subseteq  (g_1 \comdots g_\nog)$ with $m= \sum_{i=1}^\nog \degg_i -
\dip$. Theorem \ref{maintheorem} yields $R_{\sum_{i=1}^\nog
\degg_i} \subseteq (f_1 \comdots f_\dip)^*$ for arbitrary parameters
in $R$, which is well known \cite[Theorem 2.9]{hunekeparameter},
and $R_{\sum_{i=1}^\nog \degg_i +1} \subseteq (f_1 \comdots
f_\dip)^F$.

Already the parameter case shows that the bounds of Theorem
\ref{maintheorem} can not be improved
(as a bound holding for all $R$ of the given dimension).
The vanishing theorem of
Hara (\cite[Theorem~6.1]{hunekeparameter}; 
\cite{hararationalfrobenius}) shows that an element $f \in R_{\tilde{m}}$, $\tilde{m} <
\sum_{i=1}^\nog \degg_i $, does not belong to $I^*$ unless it belongs to $I$
itself. But $f \in I$ does not hold in general.
\end{remark}

\begin{remark}
\label{plus}
The \emph{plus closure} of an ideal $I \subseteq R$, $R$ a domain, is
defined by $I^+ = \{f \in R:\, \text{there exists } R \subseteq S
\text{ finite}, f \in IS\}$. There are inclusions $I^F \subseteq I^+
\subseteq I^*$. It was only proved recently that $I^+ \neq I^*$
(\cite{brennermonsky}). Does a generic inclusion $R_{m+\dip}
\subseteq I^+$ hold, with $m$ as in Theorem~\ref{maintheorem}? 
For $R_{\geq m+ \dip +1}$ the answer is yes via the Frobenius
closure. An analogue of Lemma \ref{step2}(a) does also hold for the
(graded) plus closure: The cohomology groups $H^\dip(Y, \O_Y(\tilde m
- \alpha_{\dip,i}))$ (and hence $H^{\dip}(Y, \Syz_{\dip}(\tilde m))$ and $H^{1}(Y, \Syz_{1}(\tilde m))$, where
$\tilde m = m+\dip$) can
be annihilated by a finite cover $Y' \rightarrow Y$ due to the graded
version of \cite[Theorem~5.1]{smithparameter}. 
However, this does not mean that $H^1(Y',
\Syz_{1}(\tilde m))=0$, so there is no basis for a semicontinuity argument.
\end{remark}

\begin{remark}
Since $I^F \subseteq I^*$, Theorem \ref{maintheorem}(b) remains true if we
replace $I^F$ by $I^*$, so it also yields a \emph{generic} bound (in the
sense of Definition~\ref{genericdef}(a)) for
tight closure in the normal case. Note that unlike tight closure,
Frobenius closure does not commute with normalization. For example,
let
$R=k[x,y]/(xy)$ and let $I:=(ax+by)\subseteq R$ with $a,b \neq
0$. Then we have
$x \in R \cap IR^{\rm{norm}} \subseteq R \cap (I R^{\rm{norm}})^F$, but $x \notin I^F$.
\end{remark}

\begin{remark}
\label{charzero}
There are several problems to get similiar statements in characteristic zero. One problem is how the open subsets of the parameter space on which the generic inclusion statements hold vary with the prime characteristic and whether there exists an open subset in the relative setting. This is also a problem for the degree bound which ensures containment in the Frobenius closure.

In working with the generic point in characteristic zero, that is, over the field $\QQ(W_{i \nu})$, we have another problem: we can consider the base ring $\ZZ(W_{i \nu})= \ZZ[W_{i \nu}]_S$, where $S$ is the multiplicative system of all primitive polynomials, and the residue class fields are $\QQ(W_{i \nu})$ and $\ZZ/(p)(W_{i \nu})$. We have tight closure inclusions over all closed points (of positive characteristics), but this base ring is not of finite type over $\ZZ$, so it can not be taken as an arithmetic basis to get tight closure in characteristic zero.
\end{remark}

\section{Generic ideal inclusion and the Fr\"oberg conjecture}
\label{ideal-inclusion}

By the results of the previous sections, we know that if $k$ is a field of positive characteristic, then a degree bound for ideal inclusion in the 
polynomial ring $P=k[x_0 \comdots x_\dip]$
yields a generic degree bound for Frobenius closure and for tight
closure in any standard-graded $k$-algebra of dimension $\dip+1$. 
Therefore we have to look for good inclusion bounds for $P$
for a given degree type. So let 
$\degg_1 \comdots \degg_\nog$ be natural numbers, and
let $g_1 \comdots g_\nog \in P$ be homogeneous polynomials of 
degree $\deg(g_i)=\degg_i$. 
We seek to describe the smallest number $m_0 = \min \{m:\, P_m \subseteq (g_1 \comdots g_\nog)\}$ in terms of $\nog$
and $\degg$, and for generic choice of the $g_i$.

For fixed $(g_1 \comdots g_\nog)=I$ the number $m_0$ is the smallest 
zero of the
\emph{Hilbert function}
\[ H(m):= \dim _k (P/I)_m \, .\]

This can be estimated using a piecewise polynomial function as
follows.

\newcommand{\Degg}{A}
\newcommand{\Subdegg}{B}
\newcommand{\subdegg}{b}
\newcommand{\subnog}{r}

\begin{definition}
\begin{enumerate}
\item[(a)]
For $\Degg=(\degg_1 \comdots \degg_\nog) \in \NN^\nog$ and
$\Subdegg=(\subdegg_1 \comdots \subdegg_\subnog) \in \NN^\subnog$, we
write $\Subdegg \subseteq \Degg$ if there exists a permutation
$\Subdegg'$ of $\Subdegg$ and a subset $\{i_1 \comdots i_\subnog\}
\subseteq \{1 \comdots \nog\}$ such that $\Subdegg' = (\degg_{i_1}
\comdots \degg_{i_{\subnog}})$. Moreover, we put
$\ell(\Subdegg):=\subnog$ and $|\Subdegg|:=\subdegg_1+ \ldots + \subdegg_\subnog$.
\item[(b)]
For given $\nog, \dip \in \NN$, 
$\Degg=(\degg_1 \comdots \degg_\nog) \in \NN^\nog$, and $m \in \NN_0$,
set
\[F(m):=F(\nog, \dip, A, m):=\sum_{\Subdegg \subseteq \Degg} (-1)^{\ell(B)}
\binom{\dip +m -|\Subdegg|}{\dip}\]
where $\binom{N}{K}$ is taken to be zero unless $N \geq K \geq 0$. 
In particular, for constant degrees $\degg_1 = \ldots = \degg_\nog=\degg$,
we have
\[ F(m) = \sum_{s=0}^\nog (-1)^s \binom{\nog}{s} \binom{ \dip-2+m-s
\degg}{\dip} \, . \]
Moreover, we set
\[F^+(m):=\max\{0, F(m)\}.\]
\end{enumerate}
\end{definition}

We know \cite{anick, valla} that
\[ H(m) \geq F^+(m)\]
for all $m \in \NN_0$. For constant degrees 
$\degg_1= \ldots =\degg_\nog=:\degg$, this is
equivalent to the coefficient-wise inequality of formal power series
\[ \mathcal{H} (\lambda) \geq \left|\frac{(1 -
\lambda^{\degg})^{\nog}}{(1-
\lambda)^{\dip +1}} \right|,\]
where the absolute value symbols denote the initial non-negative
segment of the power series (obtained by replacing all negative
coefficients with zero) and where $\mathcal{H} (\lambda) $ denotes
the \emph{Hilbert series} whose coefficients are $H(m)$.

\begin{conjecture}[Fr\"oberg \cite{froeberg}]  
For generic ideal generators $g_1 \comdots g_\nog$, we have 
\[ H(m) = F^+(m)\]
for all $m \in \NN_0$.
\end{conjecture}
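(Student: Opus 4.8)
Since the inequality $H(m) \geq F^+(m)$ is already known (Anick, Valla \cite{anick, valla}), the whole problem is the reverse inequality $H(m) \leq F^+(m)$ for a generic choice of the $g_i$, and here the plan is to use semicontinuity exactly as in Section~\ref{semicontinuity-section}. For each fixed $m$, the assignment ``coefficient tuple $\mapsto \dim_k (P/(g_1 \comdots g_\nog))_m$'' is upper semicontinuous on the parametrizing affine space, because it equals $\dim_k P_m$ minus the rank of the multiplication map $\bigoplus_{i=1}^\nog P_{m-\degg_i} \to P_m$, whose matrix entries are polynomials in the coefficients; hence the generic value of $H(m)$ is its pointwise minimum and is attained on a dense open set. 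Because $\nog \geq \dip+1$, a generic choice of the $g_i$ already contains a regular sequence of length $\dip+1$, so $P/(g_1 \comdots g_\nog)$ is Artinian and $H(m)=0$ once $m$ exceeds an a priori bound $M$ (for instance $M = \degg_1 \plusdots \degg_{\dip+1}$ suffices). Intersecting the finitely many dense opens coming from the degrees $m \leq M$ then reduces the conjecture to a purely combinatorial assertion: \emph{there exists a single homogeneous ideal of the prescribed degree type whose Hilbert function equals $F^+$ in every degree.}

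\textbf{Known cases ($\dip \leq 2$).} For $\dip = 0$ this is immediate, since $(x_0^{\degg_1} \comdots x_0^{\degg_\nog}) = (x_0^{\min_i \degg_i})$ realizes $F^+$ exactly. For $\dip = 1$ one invokes the classical structure theory of graded ideals in $k[x_0,x_1]$: the first $\dip+1 = 2$ generic forms have no common factor, hence form a regular sequence whose Koszul quotient has Hilbert series $(1-\lambda^{\degg_1})(1-\lambda^{\degg_2})/(1-\lambda)^2$, a polynomial with non-negative coefficients and therefore already of the conjectured shape; the remaining $\nog-2$ generic forms are absorbed using that multiplication by a generic form on an Artinian quotient of $k[x_0,x_1]$ has maximal rank in every degree. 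For $\dip = 2$ one appeals to Anick's theorem \cite{anick}: he exhibits a sufficiently explicit ideal in $k[x_0,x_1,x_2]$ whose Hilbert function is computed by a direct combinatorial argument and seen to equal $F^+$, after which semicontinuity transfers the equality to genuinely generic forms.

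\textbf{The obstacle ($\dip \geq 3$).} This is where I expect the real difficulty, and where the statement is in fact open. The natural inductive attempt --- split off a maximal regular subsequence, pass to the resulting Artinian complete intersection $A$, and prove that multiplication by each successive generic form on the iterated quotients has maximal rank, i.e. a Weak or Strong Lefschetz-type property --- breaks down, because Lefschetz properties are known to fail: already in characteristic zero for monomial complete intersections in $\geq 4$ variables (Migliore, Mir\'o-Roig, Nagel), and more drastically in positive characteristic, where the answer genuinely depends on $p$. The reason the Fr\"oberg conjecture nevertheless remains plausible is that it only predicts the degree in which the Hilbert function \emph{first} drops to zero, not maximal rank in every intermediate jump; but converting this weaker prediction into a proof appears to need either an explicit combinatorial description of the generic initial ideal of ``complete intersection plus extra generic forms'' (unknown in four or more variables) or a new positivity input for the truncated series $F^+$. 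I do not see a way around the Lefschetz route in general. The most one can currently say is that the conjecture is a theorem for $\dip \leq 2$, that the first nontrivial degree is settled in all cases (Hochster--Laksov), and that scattered further cases are known (e.g. $\nog = \dip+2$ generic forms), while the general statement remains conjectural.
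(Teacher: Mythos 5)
You have correctly identified that this statement is an open conjecture, not a theorem, and accordingly you do not (and cannot) produce a proof. The paper likewise offers no proof: it states the conjecture verbatim (attributed to Fr\"oberg) and then, in the \emph{separate} Proposition that follows, simply records the known cases with citations --- $\nog \leq \dip+1$ and $\nog=\dip+2$ (Iarrobino, Stanley), $\dip=1$ (Fr\"oberg), $\dip=2$ (Anick) --- precisely because the general statement remains conjectural. Your treatment therefore matches the paper's in substance.

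Two small remarks on your discussion. First, your semicontinuity reduction is the right framing, but note that the conjecture as stated does not assume $\nog \geq \dip+1$; when $\nog \leq \dip$ the quotient is not Artinian and $F^+(m)$ is simply the complete-intersection Hilbert function, which is automatically realized by a generic regular sequence, so the ``upper bound $M$'' you introduce is unneeded in that regime. Second, your description of the $\dip=1$ case via maximal-rank multiplication and of the obstruction for $\dip \geq 3$ via failure of Lefschetz-type properties in $\geq 4$ variables (and its characteristic-dependence) is an accurate account of why the problem is genuinely hard; it is also worth emphasizing, as you do implicitly, that the paper only ever \emph{uses} the conjecture in dimensions $\dip \leq 2$, where it is a theorem of Fr\"oberg and Anick respectively, so no gap is introduced into the paper's actual results.
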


\begin{proposition}
\cite[Section 4]{chandlergeomfi}
The Fr\"oberg conjecture holds in the following cases:
\begin{itemize}
\item $\nog \leq \dip +1$, and $\nog = \dip+2$ in characteristic $0$:
Iarrobino \cite{iarrobinocompressedalg}.
(In the case $\nog \leq \dip +1$, $\nog$ generic ideal generators always form a regular sequence and
this implies that the conjecture holds, see
\cite[Sections~1 and~4]{valla}.
\item $\nog = \dip+2$: Stanley \cite{stanleyhilbert}
\item $\dip = 1$: Fr\"oberg \cite{froeberg}
\item $\dip =2$: Anick \cite{anick}
\end{itemize}
\end{proposition}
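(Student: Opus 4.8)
The plan is to assemble the four listed cases from the literature; in each of them the lower bound $H(m)\ge F^+(m)$ of Anick and Valla (\cite{anick,valla}) is already available, so what remains is to verify that equality holds for a generic choice of the forms $g_1\komdots g_\nog$.

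The case $\nog\le\dip+1$ I would treat directly. Over an infinite field a generic tuple $g_1\komdots g_\nog$ with $\deg(g_i)=\degg_i$ is a regular sequence (see \cite[Sections~1 and~4]{valla}), so the Koszul complex on the $g_i$ is a free resolution of $P/I$ and
\[ \mathcal{H}(\lambda)=\frac{\prod_{i=1}^\nog\bigl(1-\lambda^{\degg_i}\bigr)}{(1-\lambda)^{\dip+1}}
=\Bigl(\prod_{i=1}^\nog\bigl(1+\lambda+\cdots+\lambda^{\degg_i-1}\bigr)\Bigr)\cdot\frac{1}{(1-\lambda)^{\dip+1-\nog}}\, . \]
The right-hand side is a product of a polynomial with non-negative coefficients and a power series with non-negative coefficients (the exponent $\dip+1-\nog$ is $\ge 0$), so every coefficient of $\mathcal{H}(\lambda)$ is $\ge 0$; since these coefficients are by definition the numbers $F(m)$, this gives $F(m)\ge 0$ and hence $H(m)=F(m)=F^+(m)$ for all $m$.

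For $\nog=\dip+2$ and for $\dip=1$ I would invoke the classical theorems. The case $\nog=\dip+2$ in characteristic $0$ is Iarrobino's result on compressed Artinian algebras \cite{iarrobinocompressedalg}; Stanley \cite{stanleyhilbert} removes the characteristic assumption by writing $I=J+(g_{\dip+2})$ with $J=(g_1\komdots g_{\dip+1})$ a generic complete intersection and studying multiplication by the extra generic form on the Artinian Gorenstein algebra $P/J$ via a hard-Lefschetz argument. The case $\dip=1$, where $P=k[x_0,x_1]$ and every graded ideal has an explicit Hilbert function, is Fr\"oberg's original theorem \cite{froeberg}: one computes the generic Hilbert function and checks directly that it equals $F^+$.

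The last and only genuinely hard case is $\dip=2$, i.e.\ $P=k[x_0,x_1,x_2]$, which is Anick's theorem \cite{anick} (expounded in \cite[Section~4]{chandlergeomfi}). The structure of the argument is the standard semicontinuity one: the Hilbert function of an ideal of fixed degree type is upper semicontinuous in the coefficients of its generators, so the generic value of $H(m)$ equals the minimum of $H(m)$ over all tuples of that type; combining this with the pointwise lower bound $H(m)\ge F^+(m)$ — and noting that only finitely many degrees $m$ have $F^+(m)>0$, the vanishing in the remaining degrees being automatic once $P/I$ vanishes in one degree — it suffices to exhibit a single tuple $g_1\komdots g_\nog$ of the prescribed degrees with $H(m)=F^+(m)$ in every degree. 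The hard part, which I would import from Anick rather than reprove, is exactly the construction of such a tuple — built from powers of a generic linear form together with carefully chosen monomials — and the intricate combinatorial bookkeeping needed to confirm that its Hilbert function meets the lower bound in each degree.
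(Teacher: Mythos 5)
The paper does not prove this proposition at all; it is a pure literature citation (to \cite{chandlergeomfi} and the individual sources listed). Your proposal therefore goes beyond the paper's ``proof'' by actually sketching the arguments, which is legitimate but means there is no paper-side argument to compare against.

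Your treatment of the regular-sequence case $\nog\le\dip+1$ is correct and self-contained: the Koszul complex gives the complete-intersection Hilbert series, and factoring out $(1-\lambda)^{\nog}$ exhibits $\mathcal H(\lambda)$ as a product of power series with non-negative coefficients, whence $F=F^+$ and $H=F^+$. The semicontinuity skeleton you give for $\dip=2$ (generic Hilbert function is the coefficientwise minimum, so it suffices to exhibit one tuple attaining the Anick--Valla lower bound) is also the standard reduction, and you are right that the genuinely hard part is Anick's construction of such a tuple.

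There is, however, an internal inconsistency in your account of the $\nog=\dip+2$ case. You say Stanley ``removes the characteristic assumption'' from Iarrobino's char-$0$ result ``via a hard-Lefschetz argument.'' Hard Lefschetz is itself a characteristic-zero tool (and indeed the Weak Lefschetz Property for monomial complete intersections is known to \emph{fail} in various positive characteristics), so a hard-Lefschetz proof cannot be what removes a characteristic hypothesis. Either the removal of the characteristic restriction happens by a different mechanism in Stanley's paper, or the attribution in the proposition is meant differently than you read it; in any case, as written your sentence asserts something that cannot be right, and since the whole point of this bullet is the characteristic generality you should resolve this before invoking the result in positive characteristic, which is the only characteristic relevant to Theorem~\ref{maintheorem}.
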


Suppose that we are in a situation (as listed above) where the Fr\"oberg conjecture holds. Then the smallest
positive zero $m_0$ of $H(m)$  is the smallest zero of $F^+(m)$.
Thus finding this zero amounts to solving polynomial equations of degree $\dip$.
This can be done in several cases and then yields explicit generic ideal
inclusion bounds for the polynomial ring, which yield generic tight
closure bounds via Theorem~\ref{maintheorem}. 

\begin{notation}
In the following, we assume that all ideal generators have the same
degree, which we denote $\degg$.
\end{notation}

In the parameter case, i.e. when $\nog = \dip +1$, then 
\[m_0 = \nog \degg - \dip \]
is the generic ideal inclusion bound for $\nog$ elements of degree
$\degg$ in the polynomial ring of dimension $\dip+1$.

In the ``almost-parameter case'', i.e. when $\nog=\dip+2$, we have the
following result by Migliore and Mir\'o-Roig.
\begin{proposition}
\cite[Lemma~2.5]{miglioremiroroigminimal}
\label{mimi}
Let $\nog =\dip+2$. Then
\[m_0:=\bigm\lfloor \frac{1}{2}( \nog \degg-\nog) \bigm\rfloor +1\]
is the generic ideal inclusion bound for $\nog$ elements of
degree~$\degg$ in the
polynomial ring of dimension $\dip +1$.
\end{proposition}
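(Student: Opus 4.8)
The plan is to reduce the statement to a counting fact about powers of the polynomial $1 + \lambda \plusdots \lambda^{\degg-1}$. Since $\nog = \dip+2$, the Fr\"oberg conjecture holds in this case (Stanley, as quoted in the proposition above), so for generic $g_1 \comdots g_\nog$ the Hilbert function of $P/(g_1 \comdots g_\nog)$ equals $F^+(m) = \max\{0,F(m)\}$ for all $m$. As already noted, $m_0$ is then the smallest zero of $F^+$; since $F(0) = 1 > 0$ and $F^+ = H$ is the Hilbert function of a quotient ring (hence zero from $m_0$ on once it vanishes), this means $m_0 = \min\{m \in \NN : F(m) \leq 0\}$. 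So it suffices to locate the first sign change of the Fr\"oberg numerator.

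Next I would write the numerator explicitly. For $\nog = \dip+2$ and constant degree $\degg$, the Fr\"oberg numerator is
\[ \frac{(1-\lambda^{\degg})^{\dip+2}}{(1-\lambda)^{\dip+1}} \;=\; (1-\lambda^{\degg})\,\psi(\lambda), \qquad \psi(\lambda) := \Big(1 + \lambda \plusdots \lambda^{\degg-1}\Big)^{\dip+1} \;=\; \sum_{j \geq 0} c_j \lambda^j, \]
so that $F(m) = c_m - c_{m-\degg}$ (with $c_j := 0$ for $j < 0$). Set $D := (\dip+1)(\degg-1) = \deg \psi$. The coefficient sequence $(c_j)_{j=0}^D$ is symmetric, $c_j = c_{D-j}$, and I claim it is \emph{strictly} unimodal in the sharp sense that it is strictly increasing on $\{0 \comdots \lfloor D/2 \rfloor\}$ and strictly decreasing on $\{\lceil D/2\rceil \comdots D\}$ --- so that the only possible plateau is the two-element one at the centre, which occurs exactly when $D$ is odd. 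The symmetry and (weak) unimodality follow from the classical fact that a power of a symmetric unimodal polynomial with non-negative coefficients is again symmetric and unimodal (equivalently, $\psi$ is the Hilbert series of the monomial complete intersection $k[x_0 \comdots x_\dip]/(x_0^{\degg} \comdots x_\dip^{\degg})$, whose Hilbert function is unimodal); upgrading to the strict statement is a short shifting/injection argument.

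Granting this, I would resolve the comparison $c_m \leq c_{m-\degg}$ by rewriting $c_{m-\degg} = c_{D-m+\degg}$ via symmetry and measuring distances from the centre $D/2$: one has $|m - D/2| \geq |(m-\degg) - D/2|$ exactly when $m \geq \tfrac{D+\degg}{2}$, and strict unimodality together with $\degg \geq 2$ (so that $m$ and $m-\degg$ cannot both lie on the two-element central plateau) makes this inequality strict for $m < \tfrac{D+\degg}{2}$, with $F(\tfrac{D+\degg}{2}) = 0$ when this value is an integer. Therefore
\[ m_0 \;=\; \Big\lceil \tfrac{D+\degg}{2} \Big\rceil \;=\; \Big\lceil \tfrac{(\dip+1)(\degg-1)+\degg}{2} \Big\rceil \;=\; \Big\lceil \tfrac{\nog(\degg-1)+1}{2} \Big\rceil \;=\; \Big\lfloor \tfrac{\nog\degg-\nog}{2} \Big\rfloor + 1, \]
where the last step is the routine identity $\lceil \tfrac{N+1}{2}\rceil = \lfloor \tfrac{N}{2}\rfloor + 1$ with $N = \nog(\degg-1)$. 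The degenerate cases ($\degg = 1$, or $m_0 - \degg < 0$, or $m_0 > D$) are immediate and are checked separately.

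I expect the main obstacle to be the unimodality input: weak unimodality of $\psi$ is classical, but the argument genuinely needs the \emph{strict} form, and one must deal with the two-element central plateau when $(\dip+1)(\degg-1)$ is odd and with the boundary effects where $c_{m-\degg}$ is forced to vanish. Everything else --- invoking Stanley's theorem, manipulating the numerator, the distance computation, and the final ceiling-floor identity --- is routine.
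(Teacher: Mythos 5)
The paper does not actually prove this proposition: it is quoted with a citation to Migliore and Mir\'o-Roig, \cite[Lemma~2.5]{miglioremiroroigminimal}, and no argument is given. So there is no ``paper's own proof'' to compare against; what you have written is a self-contained derivation that the paper delegates to the literature. The high-level plan is sound: invoke Stanley's theorem to equate the generic Hilbert function with $F^+(m)$, observe that $m_0$ is the first $m$ with $F(m)\le 0$, write the Fr\"oberg numerator as $(1-\lambda^{\degg})\psi(\lambda)$ with $\psi=(1+\lambda+\dots+\lambda^{\degg-1})^{\dip+1}=\sum c_j\lambda^j$ so that $F(m)=c_m-c_{m-\degg}$, and then use symmetry $c_j=c_{D-j}$ (with $D=(\dip+1)(\degg-1)$) together with unimodality to locate the sign change. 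The final ceiling--floor manipulation, $\big\lceil\tfrac{D+\degg}{2}\big\rceil=\big\lceil\tfrac{\nog(\degg-1)+1}{2}\big\rceil=\big\lfloor\tfrac{\nog\degg-\nog}{2}\big\rfloor+1$, is correct, and I checked consistency with the paper's Lemma~\ref{dim2bound} ($\dip=1$, $\nog=3$) and with the remark after Lemma~\ref{dim3bound} ($\dip=2$, $\nog=4$).

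The genuine gap is the strict unimodality input, and you have underestimated it. Weak unimodality is not enough: weak unimodality plus symmetry only gives $c_m\ge c_{m-\degg}$ for $m<\lceil(D+\degg)/2\rceil$, which leaves open the possibility of an earlier zero of $F$ sitting on an interior plateau of the $c_j$. You assert that the upgrade to strict unimodality of $(1+\lambda+\dots+\lambda^{\degg-1})^{\dip+1}$ for $\dip+1\ge 2$ is ``a short shifting/injection argument,'' but the obvious shift (increment the first coordinate $<\degg-1$) is \emph{not} injective: with $\dip+1=2$, $\degg=3$, $j=2$ one has $(2,0)\mapsto(2,1)$ and $(1,1)\mapsto(2,1)$. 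Strict unimodality of these polynomials for $\dip+1\ge 2$ is indeed true --- it follows, for instance, from the strong Lefschetz property of the monomial complete intersection $k[x_0,\dots,x_\dip]/(x_0^\degg,\dots,x_\dip^\degg)$ (Stanley, Watanabe), or from known strict-unimodality results on ``restricted composition'' numbers --- but it needs either a real argument or a precise reference, not the envisioned one-line shift. Until that step is supplied, the proof is incomplete; once it is supplied, the rest goes through as you describe, and the degenerate cases ($\degg=1$, or $m-\degg<0$, or $m>D$) are handled correctly.
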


\begin{corollary}
Let $R$ be a standard-graded $k$-algebra of dimension $\dip +1$. Then
\[ R_{\geq \lfloor \frac{1}{2} (\dip+2)(\degg+1) \rfloor  -1} \subseteq I^* \]
holds for an ideal $I$ generated by $\dip+2$ generically chosen
elements of degree~$\degg$.
\end{corollary}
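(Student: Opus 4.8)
The plan is to read the corollary off Theorem~\ref{maintheorem}(a) and Proposition~\ref{mimi}, the only genuine content being an elementary identity for the floor function that turns one bound into the other.

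First I would set $\nog=\dip+2$ and invoke Proposition~\ref{mimi}: for a generic choice of homogeneous $g_1\comdots g_\nog\in P=k[x_0\comdots x_\dip]$ of degree $\degg$ one has $P_{m_0}\subseteq(g_1\comdots g_\nog)$, where
\[ m_0=\bigm\lfloor\tfrac{1}{2}(\nog\degg-\nog)\bigm\rfloor+1=\bigm\lfloor\tfrac{1}{2}(\dip+2)(\degg-1)\bigm\rfloor+1. \]
In particular such $g_i$ exist, so the hypotheses of Theorem~\ref{maintheorem} are satisfied with $m=m_0$ (a graded Noether normalization $P\subseteq R$ is available because $R$ is standard-graded of dimension $\dip+1$). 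Theorem~\ref{maintheorem}(a) then yields
\[ R_{m_0+\dip}\subseteq(f_1\comdots f_\nog)^* \]
for $\dip+2$ generically chosen degree-$\degg$ elements $f_1\comdots f_\nog\in R$ --- more precisely, in the generic point of $Z$ and countably generically, in the sense of Definition~\ref{genericdef}(b),(c).

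Next I would identify $m_0+\dip$ with the number in the statement. Since $\dip+2$ is an integer and $\tfrac{1}{2}(\dip+2)(\degg+1)=\tfrac{1}{2}(\dip+2)(\degg-1)+(\dip+2)$, we get $\lfloor\tfrac{1}{2}(\dip+2)(\degg+1)\rfloor=\lfloor\tfrac{1}{2}(\dip+2)(\degg-1)\rfloor+(\dip+2)$, and therefore
\[ \bigm\lfloor\tfrac{1}{2}(\dip+2)(\degg+1)\bigm\rfloor-1=\bigm\lfloor\tfrac{1}{2}(\dip+2)(\degg-1)\bigm\rfloor+\dip+1=m_0+\dip. \]
Finally, because $(f_1\comdots f_\nog)^*$ is an ideal of the standard-graded ring $R$ and $R_1\cdot R_j=R_{j+1}$ for every $j$, the inclusion $R_{m_0+\dip}\subseteq I^*$ propagates upward to $R_{\geq m_0+\dip}\subseteq I^*$, which is exactly the asserted containment.

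There is no hard step here: all the substance sits in Theorem~\ref{maintheorem}(a) and Proposition~\ref{mimi}, and the only things one must actually check are the displayed floor identity and that the two flavours of ``generic'' in play --- generic generators in $P$ for Proposition~\ref{mimi}, and the generic-point/countably-generic statement of Theorem~\ref{maintheorem}(a) --- are the ones being combined. The mildest point worth a sentence is that the corollary's phrase ``generically chosen'' should be read in the sense of Definition~\ref{genericdef}(b),(c), as in Theorem~\ref{maintheorem}(a); and if one wants the statement over a ground field that is not algebraically closed, the only extra ingredient is the standard reduction to the algebraically closed case underlying Theorem~\ref{maintheorem}.
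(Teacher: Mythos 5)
Your proposal is correct and follows the paper's own route: combine Proposition~\ref{mimi} with Theorem~\ref{maintheorem}(a) and carry out the floor-function identity (which the paper dismisses as a ``direct calculation''), observing along the way that the standard grading lets the bound propagate to all degrees $\geq m_0+\dip$ and that ``generically chosen'' is to be read in the sense of Definition~\ref{genericdef}(b),(c) as in Theorem~\ref{maintheorem}(a). Your verification of $\lfloor\tfrac12(\dip+2)(\degg+1)\rfloor-1=\lfloor\tfrac12(\dip+2)(\degg-1)\rfloor+\dip+1$ is exactly the computation the paper has in mind.
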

\begin{proof}
This follows from Proposition \ref{mimi} and Theorem
\ref{maintheorem} by a direct calculation.
\end{proof}

We now look at the first positive zero of $F^+(m)$ in the low-dimensional
cases where the Fr\"oberg conjecture holds, i.e. in the cases 
$\dip=1$ and $\dip=2$. 

\begin{lemma}
\label{dim2bound}
Let $\dip=1$. For $\nog \geq 2$ and constant degree $\degg$, the number
$$m_0=\bigm\lceil\frac{\nog}{\nog-1} \degg \bigm\rceil -1$$
is the smallest zero of $F^+(m)$.
\end{lemma}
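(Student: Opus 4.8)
The plan is to compute the first positive zero of $F^+(m)$ directly in the case $\dip = 1$ with constant degree $\degg_1 = \ldots = \degg_\nog = \degg$. Specializing the defining formula, for $\dip = 1$ we have
\[ F(m) = \sum_{s=0}^\nog (-1)^s \binom{\nog}{s} \binom{m-1-s\degg}{1}, \]
where $\binom{N}{1} = N$ if $N \geq 1$ and is zero otherwise. So only those $s$ with $m - 1 - s\degg \geq 1$, i.e. $s \leq (m-2)/\degg$, contribute, and for such $s$ the summand equals $(-1)^s \binom{\nog}{s}(m-1-s\degg)$. First I would fix $m$ and split $F(m)$ into the part $(m-1)\sum_s (-1)^s\binom{\nog}{s}$ and the part $-\degg\sum_s (-1)^s s\binom{\nog}{s}$, where both sums run over $0 \leq s \leq t := \lfloor (m-2)/\degg \rfloor$. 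Using the standard partial-alternating-sum identities $\sum_{s=0}^t (-1)^s\binom{\nog}{s} = (-1)^t\binom{\nog-1}{t}$ and $\sum_{s=0}^t (-1)^s s\binom{\nog}{s} = (-1)^t t\binom{\nog-1}{t}$ (the latter from $s\binom{\nog}{s} = \nog\binom{\nog-1}{s-1}$ together with the first identity), one obtains a closed form
\[ F(m) = (-1)^t \binom{\nog-1}{t}\bigl(m - 1 - t\degg\bigr). \]
Since $0 \leq m - 1 - t\degg$ by the definition of $t$, the sign of $F(m)$ is governed by $(-1)^t\binom{\nog-1}{t}$, which is positive for $0 \leq t \leq \nog-1$ and zero for $t \geq \nog$ (and again one must track the boundary $t = \nog - 1$ where $m - 1 - t\degg$ could vanish).

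Next I would translate ``first positive zero of $F^+$'' into a statement about $t$. As long as $t \leq \nog - 2$ we have $\binom{\nog-1}{t} > 0$ and $F(m) = 0$ forces $m - 1 = t\degg$, i.e. $m \equiv 1 \pmod \degg$; the smallest such $m > 0$ with $t \leq \nog - 2$ gives a candidate. More directly: $F(m) \geq 0$ always holds in the relevant range, so $F^+(m) = F(m)$, and $F(m) = 0$ exactly when either $\binom{\nog-1}{t} = 0$ (i.e. $t \geq \nog$, i.e. $m \geq \nog\degg + 2$) or $m - 1 - t\degg = 0$. The value $m - 1 - t\degg$ equals $(m-1) \bmod \degg$ when $t = \lfloor(m-1)/\degg\rfloor$, but one has to be careful because $t$ was defined via $\lfloor(m-2)/\degg\rfloor$; reconciling these two floors (they differ precisely when $\degg \mid m-1$) is a small case check. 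The upshot is that the first zero is the smallest $m$ for which $\lfloor (m-2)/\degg\rfloor \geq \nog - 1$, equivalently $m - 2 \geq (\nog-1)\degg$, which after checking the residue does not allow a smaller zero yields $m_0 = \lceil \frac{\nog}{\nog-1}\degg\rceil - 1$. I would verify the ceiling identity by writing $\nog\degg = (\nog-1)\degg + \degg$ and noting $\lceil \frac{\nog\degg}{\nog-1}\rceil = \degg + \lceil\frac{\degg}{\nog-1}\rceil$, then comparing with the threshold $m - 1 > \frac{\degg(\nog-1) + \text{something}}{\nog-1}$.

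The main obstacle I anticipate is purely bookkeeping: correctly handling the two competing floor functions ($\lfloor(m-2)/\degg\rfloor$ from the contributing range versus $\lfloor(m-1)/\degg\rfloor$ which naturally appears in the closed form) and the boundary index $t = \nog - 1$, where both $\binom{\nog-1}{t}$ is nonzero and $m - 1 - t\degg$ may or may not vanish, so that one must rule out a spurious earlier zero. There are no deep ideas here — the partial alternating binomial sum identity is the only real tool — so I would state that identity as a preliminary observation (or cite it), then present the closed form for $F(m)$, and finally do the arithmetic to pin down $m_0$, treating the cases $\degg \mid \nog\degg$ (always true, so perhaps better: $(\nog-1) \mid \degg$ versus not) separately where the ceiling bites.
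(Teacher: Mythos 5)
Your proof attempt has several compounding errors and does not establish the claimed formula.

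First, you specialized from the paper's displayed ``In particular'' constant-degree formula, which writes $\binom{\dip-2+m-s\degg}{\dip}$ and is a typo; the general definition $F(m)=\sum_{\Subdegg\subseteq\Degg}(-1)^{\ell(\Subdegg)}\binom{\dip+m-|\Subdegg|}{\dip}$ gives, for $\dip=1$ and constant degree $\degg$,
\[ F(m)=\sum_{s=0}^{\nog}(-1)^{s}\binom{\nog}{s}\binom{m+1-s\degg}{1}, \]
with $m+1$, not $m-1$. This is forced: the $s=0$ term must be $\dim_k P_m=m+1$; your version gives $F(0)=0$ rather than $1$. The error shifts the contributing range (it should be $s\leq m/\degg$, so $t=\lfloor m/\degg\rfloor$, not $\lfloor(m-2)/\degg\rfloor$) and shifts the final answer.

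Second, the identity you quote, $\sum_{s=0}^{t}(-1)^{s}s\binom{\nog}{s}=(-1)^{t}t\binom{\nog-1}{t}$, is false; take $\nog=4$, $t=2$: the left side is $0-4+12=8$, the right side is $2\binom{3}{2}=6$. Carrying out the reduction $s\binom{\nog}{s}=\nog\binom{\nog-1}{s-1}$ and then applying the first identity to $\sum_{s=0}^{t-1}(-1)^{s}\binom{\nog-1}{s}$ actually gives $\nog(-1)^{t}\binom{\nog-2}{t-1}$. So your closed form $F(m)=(-1)^{t}\binom{\nog-1}{t}(m-1-t\degg)$ is wrong on two counts; the correct closed form (with the corrected starting formula) is $F(m)=(-1)^{t}\bigl[(m+1)\binom{\nog-1}{t}-\nog\degg\binom{\nog-2}{t-1}\bigr]$, $t=\lfloor m/\degg\rfloor$. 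Third, even granting your closed form, the final step does not land: you assert the first zero occurs once $\lfloor(m-2)/\degg\rfloor\geq\nog-1$, i.e.\ $m\geq(\nog-1)\degg+2$, which is not $\lceil\tfrac{\nog}{\nog-1}\degg\rceil-1$ (e.g.\ $\nog=3$, $\degg=10$ gives $22$ vs.\ $14$), and the phrase ``after checking the residue'' does not bridge that gap.

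The actual argument is much shorter and does not need a global closed form. For $m<2\degg$ only $s=0,1$ contribute, so on $[\degg,2\degg)$ one has $F(m)=(m+1)-\nog(m+1-\degg)=(1-\nog)(m+1)+\nog\degg$, a strictly decreasing linear function vanishing at $m=\tfrac{\nog}{\nog-1}\degg-1$; since $F(m)=m+1>0$ for $m<\degg$ and the real zero lies in $[\degg-1,2\degg-1]$, the first integer with $F(m)\leq 0$ is $m_0=\lceil\tfrac{\nog}{\nog-1}\degg\rceil-1$. Your plan (derive a closed form for $F$ valid on each interval, then locate the first sign change) would also work if the identities and the starting formula were corrected, but it is considerably more machinery than the problem requires.
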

\begin{proof}
For $m < 2 \degg -1 $ we have to consider in the formula for $F(m)$ only the first two summands,
and we find in this range the zero at
$m =  \frac{\nog}{\nog-1} \degg -1$.
\end{proof}

\begin{lemma}
\label{dim3bound}
Let $\dip=2$. For constant degree $\degg$, the smallest zero of $F^+(m)$ is
\[m_0=3\degg-2\; \text{if}\; \nog=3\] and
\[ m_0= \Bigm \lceil \frac{1}{2(\nog-1)} \bigm(
3-3\nog+2\degg \nog + \sqrt{1-2\nog + \nog^2 + 4 \degg^2 \nog}
\bigm)
\Bigm
\rceil 
\; \text{if}\; \nog \geq 4.\]
\end{lemma}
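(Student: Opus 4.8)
The plan is to treat the two cases separately, reducing in each to elementary manipulation of the function $F(m)=\sum_{s=0}^{\nog}(-1)^{s}\binom{\nog}{s}\binom{\dip+m-s\degg}{\dip}$ with $\dip=2$, so that $\binom{\dip+m-s\degg}{\dip}=\binom{m+2-s\degg}{2}$ and $\binom{m+2}{2}$ is the dimension of $P_{m}$. Since $F^{+}=\max\{0,F\}$ is nonnegative, ``the smallest zero of $F^{+}$'' is the least $m$ with $F(m)\le 0$; I will produce such an $m_{0}$ together with the fact that $F(m)>0$ for all $m<m_{0}$.

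For $\nog=3$ this is the parameter case $\nog=\dip+1$: three generic forms of degree $\degg$ in three variables form a regular sequence, so $F(m)$ is the coefficient of $\lambda^{m}$ in $(1-\lambda^{\degg})^{3}/(1-\lambda)^{3}=(1+\lambda\plusdots\lambda^{\degg-1})^{3}$, a polynomial with strictly positive coefficients and of degree $3(\degg-1)$. Hence $F^{+}=F$, and its first zero is at $m=3(\degg-1)+1=3\degg-2$ (which is the parameter-case bound $\nog\degg-\dip$ recorded above).

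For $\nog\ge 4$ I follow the pattern of the proof of Lemma~\ref{dim2bound}. The relevant zero of $F$ occurs while only the first two summands of $F(m)$ are nonzero, i.e. while $m+2-2\degg<2$, that is for $m\le 2\degg-1$; there $F(m)=\binom{m+2}{2}-\nog\binom{m+2-\degg}{2}$, and for $m\ge\degg-2$ both binomials equal their polynomial formulas, so $F(m)=\tfrac{1}{2}\bigl[(m+2)(m+1)-\nog(m+2-\degg)(m+1-\degg)\bigr]$. As a function of $m$ this is a downward parabola (leading coefficient $\tfrac{1-\nog}{2}<0$), hence positive strictly between its two roots; substituting $u=m+2$, those roots are $u_{\pm}-2$, where $(\nog-1)u^{2}-(2\nog\degg+\nog-1)u+\nog\degg(\degg+1)=0$, and a direct computation gives the discriminant $(2\nog\degg+\nog-1)^{2}-4(\nog-1)\nog\degg(\degg+1)=4\degg^2\nog+(\nog-1)^2=1-2\nog+\nog^2+4\degg^2\nog$, exactly the quantity under the root in the statement. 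Now $F(m)=\binom{m+2}{2}>0$ for $0\le m\le\degg-1$ (the second binomial vanishing there), so no zero occurs before $m=\degg$; moreover $F(\degg-1)=\binom{\degg+1}{2}>0$ shows $m=\degg-1$ lies strictly between the roots, whence $u_{+}-2>\degg-1$. The inequality $u_{+}-2\le 2\degg-1$ — which after squaring reduces to $-\degg(\nog-1)(\nog-4)\le(\nog-1)(\nog-2)$, true for $\nog\ge 4$ — then places $\lceil u_{+}-2\rceil$ in the window $\{\degg,\ldots,2\degg-1\}$ on which $F$ agrees with the quadratic, and on that window the quadratic is $>0$ for $m<u_{+}-2$ and $\le 0$ for $m\ge u_{+}-2$. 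Hence the least $m$ with $F(m)\le 0$ is $m_{0}=\lceil u_{+}-2\rceil$, and
\[
u_{+}-2=\frac{2\nog\degg+\nog-1-4(\nog-1)+\sqrt{1-2\nog+\nog^2+4\degg^2\nog}}{2(\nog-1)}=\frac{3-3\nog+2\degg\nog+\sqrt{1-2\nog+\nog^2+4\degg^2\nog}}{2(\nog-1)},
\]
which is the claimed formula.

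The only point requiring genuine (if routine) care is the range bookkeeping for $\nog\ge 4$: one must confirm both that $F(m)>0$ for every $m<m_{0}$ and that $m_{0}\le 2\degg-1$ (so that the two-term closed form, with both binomials honest quadratics, is valid at $m_{0}$), and these come down to the two elementary inequalities isolated above. Everything else is the quadratic formula and the discriminant identity.
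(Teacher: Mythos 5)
Your proof is correct. For $\nog\ge 4$ you follow the same route as the paper: on the window where only the $s=0,1$ summands of $F$ contribute, $F$ agrees with a downward quadratic in $m$, and $m_0$ is the ceiling of the larger root; the discriminant identity $(2\nog\degg+\nog-1)^2-4(\nog-1)\nog\degg(\degg+1)=1-2\nog+\nog^2+4\degg^2\nog$ and the shift $u=m+2$ recover exactly the stated formula. What you add, and what the paper leaves implicit, is the bookkeeping that actually pins the root down: $F(\degg-1)=\binom{\degg+1}{2}>0$ forces $u_+-2>\degg-1$, and $u_+-2\le 2\degg-1$ reduces after squaring to $-\degg(\nog-1)(\nog-4)\le(\nog-1)(\nog-2)$, which is precisely where the hypothesis $\nog\ge 4$ enters. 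This is a genuine strengthening of the written argument, since without it one has not ruled out that the two-term closed form is being evaluated outside its range of validity. For $\nog=3$ you diverge from the paper: rather than noting (as the paper does) that the quadratic on $[\degg,2\degg-1]$ has no zero there and then chasing $F$ into the next window $[2\degg,3\degg-1]$, you observe that $\nog=\dip+1$ is the parameter case, so $F$ is the coefficient sequence of $(1+\lambda+\cdots+\lambda^{\degg-1})^3$, a polynomial of degree $3\degg-3$ with strictly positive coefficients, giving $m_0=3\degg-2=\nog\degg-\dip$ by inspection. That is cleaner than the paper's interval-by-interval continuation and ties the answer directly to the parameter bound already recorded earlier in the section. (Incidentally, your reading $\binom{\dip+m-s\degg}{\dip}$ rather than the paper's misprinted $\binom{\dip-2+m-s\degg}{\dip}$ is the correct one, as it matches both the set-indexed definition of $F$ and the power-series expansion.)
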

\begin{proof}
We extend $F(m)$ to a function $\tilde F(r)$ of real numbers by setting
$ \binom{ r}{ k} := \frac{r (r-1)
\ldots (r-k+1)}{k!}$ for $r \in \RR, r \geq k \geq 0$,
$ \binom{ r}{ k} :=0$ otherwise, and
\[ \tilde F(r):= \sum_{s=1}^{\nog} (-1)^s \binom{ \nog }{ s } \binom{ 2 + r - s \degg}{ 2} \,  .   \]
Thus $\tilde F(r)$ is a function from $\RR$ to $\RR$
that agrees with the ``Fr\"oberg function'' $F(m)$ for $r=m \in \NN$
and is continuous everywhere except at the places $r=s \degg$, for
$s=1 \comdots \nog$.

On any interval where $\tilde F(r)$ is continuous, it agrees with a quadratic
polynomial in $r$, and if this polynomial has a zero in the relevant
interval then this gives a zero of $F^+(m)$. For $\nog \geq 4$, we
find in the interval $[\degg, 2 \degg -1]$ a zero as given above.
For $\nog=3$, there are no zeros in this interval and we find the
first zero in the interval $[2 \degg, 3 \degg-1]$.
\end{proof}

\begin{remark}
Note that our last result is compatible with the result by
Migliore and Mir\'o-Roig. For $\dip=2$, $\nog=4$ and constant degree
$\degg$, their result is
$2\degg-1$. One easily checks that
\[\frac{1}{6}\bigm(3-12+8\degg+\sqrt{9+16\degg^2}\bigm) - (2\degg-1)
\in (-1, 0)\]
and thus the zero of $F^+(m)$ given by Lemma \ref{dim3bound} is
equal to $2\degg-1$.
\end{remark}

\section{A slightly better bound for strongly semistable syzygy sheaves}
\label{semistab}

We describe another situation where the cohomological conditions on
$\Syz_\dip$ are also fulfilled and where again the inclusion into
tight closure resp. Frobenius closure can be deduced.
Let $\mathcal{S}$ be a locally free sheaf on $Y= \Proj R$. Then the
\emph{slope} of $\mathcal{S}$ is defined to be
$$\mu(\mathcal{S}):= \frac{\deg(\mathcal{S})}{\rank(\mathcal{S})}$$
(for the degree fix an ample invertible sheaf). The sheaf $\mathcal{S}$ is called \emph{semistable} if
$\mu(\mathcal{T}) \leq \mu(\mathcal{S})$ holds for all coherent subsheaves
$\mathcal{T} \subseteq \mathcal{S}$.
A locally free sheaf is called
\emph{strongly semistable} if all of its Frobenius
pullbacks are semistable. A semistable locally free sheaf
$\mathcal{S}$ of negative degree has no global sections.

\begin{proposition}
\label{semistablecoho}
Let $\mathcal S$ be a strongly semistable locally free sheaf on $Y=\Proj R$ of dimension $\dip$,
where $R$ is a normal standard-graded $k$-domain and where $Y$ is smooth.
\begin{enumerate}
\item[(a)]
If $\deg (\shS) \geq 0$, then there exists $\twistell_0$ such that
$H^\dip(Y, {F^{e}}^{*}(\shS) \otimes \O(\twistell'))=0$ for all $e$ and all $\twistell' \geq \twistell_0$.

\item[(b)]
If $\deg (\shS) > 0$, then
$H^\dip(Y, {F^{e}}^{*}(\shS))=0$ for all $e \gg 0$.
\end{enumerate}
\end{proposition}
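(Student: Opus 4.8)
The plan is to reduce everything to Serre duality on the smooth projective variety $Y$ and the standard characterization of strong semistability in terms of slopes. First I would dualize: $H^\dip(Y, \shF) \cong H^0(Y, \shF^\dual \otimes \omega_Y)^\dual$ for any locally free $\shF$, where $\omega_Y$ is the dualizing sheaf. Applying this with $\shF = {F^e}^*(\shS) \otimes \O(\twistell')$ turns the desired vanishing into the statement that $H^0(Y, ({F^e}^*(\shS))^\dual \otimes \O(-\twistell') \otimes \omega_Y) = 0$. Now $({F^e}^*(\shS))^\dual = {F^e}^*(\shS^\dual)$, and the dual of a strongly semistable sheaf is again strongly semistable, with $\deg(\shS^\dual) = -\deg(\shS) \leq 0$ (resp. $< 0$ in part (b)). So the sheaf whose global sections we must kill is a strongly semistable sheaf ${F^e}^*(\shS^\dual)$ twisted by the invertible sheaf $\O(-\twistell') \otimes \omega_Y$.

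The key step is then a slope estimate. Twisting a semistable sheaf by an invertible sheaf preserves semistability and shifts the slope by the degree of that invertible sheaf; Frobenius pullback multiplies the slope by $q = p^e$. Hence $\shT_{e,\twistell'} := {F^e}^*(\shS^\dual) \otimes \O(-\twistell') \otimes \omega_Y$ is semistable of slope $q \cdot \mu(\shS^\dual) + \deg(\omega_Y) - \twistell' \cdot \deg(\O(1))$. A semistable locally free sheaf of negative slope (equivalently negative degree) on a projective variety has no global sections — this is exactly the fact recalled just before the proposition. For part (a), since $\mu(\shS^\dual) = -\deg(\shS)/\rank(\shS) \leq 0$, the term $q \cdot \mu(\shS^\dual)$ is $\leq 0$ for every $e$, so it suffices to choose $\twistell_0$ large enough (depending only on $\deg(\omega_Y)$, $\rank(\shS)$, and the polarization) that $\deg(\omega_Y) - \twistell' \deg(\O(1)) < 0$ for all $\twistell' \geq \twistell_0$; then $\deg(\shT_{e,\twistell'}) < 0$ for all $e$ and all $\twistell' \geq \twistell_0$, giving $H^0 = 0$ and hence the claimed vanishing. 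For part (b), we have $\twistell' = 0$ but now $\mu(\shS^\dual) < 0$ strictly, so $q \cdot \mu(\shS^\dual) + \deg(\omega_Y) \to -\infty$ as $e \to \infty$; thus for all $e \gg 0$ the degree of $\shT_{e,0}$ is negative and $H^0 = 0$, giving the vanishing.

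The main obstacle is bookkeeping rather than conceptual: one must be careful that the polarization used to define $\deg$ and $\mu$ is fixed throughout, that $\omega_Y$ is indeed locally free (which holds since $Y$ is smooth, and here $Y$ is even projective over a field so Serre duality applies cleanly), and that tensoring by an invertible sheaf genuinely preserves semistability — this follows because the slope-comparison inequality for subsheaves is shifted by the same constant on both sides. One should also note that ${F^e}^*$ of a locally free sheaf is locally free and that duality commutes with Frobenius pullback, both of which are formal. No $p$-adic or characteristic-specific subtlety enters beyond the definition of strong semistability itself, which is precisely what lets us control the slope of every Frobenius pullback simultaneously.
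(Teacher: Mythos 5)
Your proposal is correct and follows essentially the same route as the paper: Serre duality reduces the top cohomology to global sections of ${F^{e}}^{*}(\shS^\dual) \otimes \O_Y(-\twistell') \otimes \omega_Y$, and then one invokes the fact that a semistable locally free sheaf of negative degree has no global sections. Your more detailed slope bookkeeping just fills in the short "has negative degree" step that the paper states without computation.
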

\begin{proof}
By Serre duality we have 
$$H^\dip(Y, {F^{e}}^{*}(\shS) \otimes
\O(\twistell')) \cong H^0(Y, {F^{e}}^{*}(\shS^\dual)  \otimes \O_Y(-\twistell') \otimes \omega_Y).$$
For (a) choose $\twistell_0$ such that
$\O_Y(-\twistell_0) \otimes \omega_Y $ has negative degree. Then the
whole sheaf has negative degree and has, because it is semistable,
no global sections. For (b) we have $H^\dip(Y, {F^{e}}^*(\shS) )
\cong H^0(Y, {F^{e}}^*(\shS^\dual)
\otimes \omega_Y)$.
But for $e
\gg 0$ the sheaf
${F^{e}}^{*}(\shS^\dual) \otimes \omega_Y $ has again negative degree
and no sections.
\end{proof}

With this we can slightly improve the generic degree bound for Frobenius
closure under special conditions.

\begin{corollary}
\label{semistablecor}
Suppose that $\Syz_{\dip}(m)$ is strongly semistable of
positive degree. Then we have $R_m \subseteq I^F$.
\end{corollary}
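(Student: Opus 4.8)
The plan is to reduce the claim directly to the Frobenius-closure criterion of Lemma~\ref{tools-Frobeniusclosure} together with the semistability vanishing of Proposition~\ref{semistablecoho}(b), applied to the top-dimensional syzygy bundle $\Syz_\dip$. First I would recall that, as in Section~\ref{prelim}, the ring $R$ is assumed normal and standard-graded of dimension $\dip+1$ with $Y=\Proj R$, and that $I=(f_1\comdots f_\nog)$ is an $R_+$-primary ideal; these are exactly the running hypotheses under which the syzygy bundles $\Syz_j$ and the cohomology classes $c_j$ of Section~\ref{prelim} are defined, so I may use them freely.

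The key steps, in order, are: (1) Choose $m$ as in the statement so that $\Syz_\dip(m)$ is strongly semistable of positive degree. Apply Proposition~\ref{semistablecoho}(b) with $\shS=\Syz_\dip(m)$: since $\Fe$ commutes with twisting only up to the Frobenius twist, one should note that $\Fe(\Syz_\dip(m))$ differs from $\Fe(\Syz_\dip)(\text{twist})$, but in either normalization the sheaf $\Fe(\Syz_\dip(m))$ is a Frobenius pullback of the fixed strongly semistable bundle $\Syz_\dip(m)$ of positive degree, so Proposition~\ref{semistablecoho}(b) gives $H^\dip(Y,\Fe(\Syz_\dip(m)))=0$ for all $e\gg 0$. (2) Feed this vanishing into Lemma~\ref{tools-Frobeniusclosure} with $j_0=\dip$: that lemma requires $m>\alpha_{j,i}$ for all $j,i$, which here should be arranged (or noted to follow) from the hypothesis that $\Syz_\dip(m)$ has positive degree, since positivity of $\deg\Syz_\dip(m)$ forces $m$ to exceed the relevant Betti twists appearing in a resolution; once that numerical hypothesis is in place, Lemma~\ref{tools-Frobeniusclosure} immediately yields $R_m\subseteq I^F$.

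The main obstacle I expect is the bookkeeping around the hypothesis $m>\alpha_{j,i}$ of Lemma~\ref{tools-Frobeniusclosure}: the corollary's hypothesis is phrased purely in terms of the bundle $\Syz_\dip(m)$ being strongly semistable of positive degree, with no explicit free complex fixed, so one must either fix a homogeneous free complex exact on $D(R_+)$ realizing $\Syz_\dip$ and check that positive degree of the twist forces $m$ past all the $\alpha_{j,i}$, or observe that the whole argument only ever used the top sequence and that $H^\dip(Y,\O_Y(q(m-\alpha_{\dip,i})))=0$ is what is actually needed, which again follows from positivity of the twist once $e$ is large. The honest version of the proof should therefore spend a sentence reconciling ``$\deg\Syz_\dip(m)>0$'' with the inequality needed to run Lemma~\ref{tools-Frobeniusclosure}. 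Modulo that, the proof is two lines: invoke Proposition~\ref{semistablecoho}(b) for $\Syz_\dip(m)$, then Lemma~\ref{tools-Frobeniusclosure} for $j_0=\dip$, and conclude $R_m\subseteq I^F$.
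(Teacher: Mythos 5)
Your proof follows the paper's exactly: apply Proposition~\ref{semistablecoho}(b) to $\shS=\Syz_\dip(m)$ to get $H^\dip(Y,{F^e}^*(\Syz_\dip(m)))=0$ for $e\gg0$, then invoke Lemma~\ref{tools-Frobeniusclosure} with $j_0=\dip$. You are also right to flag that the numerical hypothesis $m>\alpha_{j,i}$ of Lemma~\ref{tools-Frobeniusclosure} is being silently assumed by the paper's one-line proof; your suggested bridge (positivity of $\deg\Syz_\dip(m)$ plus semistability forces $m$ past the Betti twists, e.g.\ by applying semistability to the rank-one quotients of $\Syz_\dip(m)$ landing in the summands $\O_Y(m-\alpha_{\dip,i})$ and then using the monotonicity of the maximal twist in a minimal resolution) is the correct way to justify it.
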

\begin{proof}
This follows by applying Proposition \ref{semistablecoho}(b) to
$\shS=\Syz_\dip(m)$ and then using Lemma \ref{tools-Frobeniusclosure}.
\end{proof}

When does this give a better bound?
Let $r_0$ denote the smallest zero of the
continuation $\tilde F(r)$ of the Fr\"oberg function on
$\RR_{>0}$, and suppose that this is not a natural number, so that $m_0 > r_0$.
Then $\Syz_\dip(m_0+\dip)$ has positive degree, but it contains also
$\O_{\PP^\dip}$ as a direct summand (and is therefore not semistable). In this situation it may happen
that for $R \supseteq P$ the generic last syzygy bundle has also
positive degree and is moreover strongly semistable. This behavior
occurs in the two-dimensional situation, as the following example
shows, but it is difficult to establish in higher dimensions.

\begin{example} \label{tabellen}
Let $R$ be normal and two-dimensional, so that $Y=\Proj R$ is a
smooth projective curve. Consider the constant degree type
$(\degg,\degg,\degg) $ with $\degg$ odd.
Then for generic choice on $\PP^1_k$ we have
$\Syz_1 = \O(-\frac{3 \degg +1}{2})
\oplus \O(-\frac{3\degg -1}{2})$ and the best inclusion is $P _{\geq \frac{3\degg -1 }{2}} \subseteq (g_1,g_2,g_3)$. Hence
Theorem \ref{maintheorem} yields $R_{\geq\frac{3\degg+3 }{2}}
\subseteq (f_1,f_2,f_3)^F$ for generic choice of $(f_1,f_2,f_3)$.
However, under the condition that $\Syz(f_1,f_2,f_3)$ is strongly 
semistable, Corollary \ref{semistablecor} yields the better inclusion
$R_{\geq\frac{3\degg+1 }{2}} \subseteq (f_1,f_2,f_3)^F$.
\end{example}

\section{Examples and asymptotic behavior of tight closure bounds}
\label{examples}

We are going to compare our generic tight closure bound from Theorem \ref{maintheorem} with previously known bounds, which depend on several different conditions.

\begin{proposition}[Koszul bound] (cf. \cite[Proposition 3.3]{smithgraded}, \cite[Corollary 2.6]{brennerlinearbound})
Let $f_1 \comdots f_\nog \in R$ be elements of degrees $\degg_1 \geq
\degg_2 \geq \ldots \geq \degg_\nog$ that
define an $R_+$-primary ideal. Then
\[ m_0=\sum_{i=1}^{\dip+1} \degg_i \]
is an inclusion bound for the tight closure of $(f_1 \comdots f_\nog)$.
In particular, for constant degree $\degg$, the bound is
\[ m_0=(\dip+1) \degg. \]
\end{proposition}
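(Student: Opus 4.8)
The plan is to reduce to the parameter case recorded in \cite[Theorem~2.9]{hunekeparameter}. Concretely, I would produce inside $I=(f_1\comdots f_\nog)$ a homogeneous system of parameters $y_1\comdots y_{\dip+1}$ with $\deg y_i=\degg_i$. Then $J:=(y_1\comdots y_{\dip+1})\subseteq I$ is generated by parameters of degrees $\degg_1\comdots\degg_{\dip+1}$, so $R_{\ge\sum_{i=1}^{\dip+1}\degg_i}\subseteq J^{*}\subseteq I^{*}$ by \cite[Theorem~2.9]{hunekeparameter}, which is exactly the assertion that $m_0=\sum_{i=1}^{\dip+1}\degg_i$ is an inclusion bound for $I^{*}$ (and $m_0=(\dip+1)\degg$ in the constant-degree case). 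I would assume throughout that $k$ is infinite; this is harmless, since enlarging the ground field of a standard-graded $k$-algebra along a purely transcendental extension is faithfully flat and leaves tight closure containments unchanged (cf.\ \cite{hunekeapplication}). Over an infinite field a $k$-vector space is not a finite union of proper subspaces, which is what will let the graded prime avoidance below be performed inside a prescribed graded component.

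I would construct the $y_i$ by descending induction, choosing $y_{\dip+1},y_\dip\comdots y_1$ in that order. Suppose $y_{i+1}\comdots y_{\dip+1}$ have been chosen, homogeneous of the prescribed degrees and forming part of a homogeneous system of parameters, i.e.\ $\dim R/(y_{i+1}\comdots y_{\dip+1})=i$ (for $i=\dip+1$ the list is empty and $\dim R=\dip+1$). Let $\mathfrak q_1\comdots\mathfrak q_t$ be the minimal primes of $(y_{i+1}\comdots y_{\dip+1})$ with $\dim R/\mathfrak q_\ell=i$; these are homogeneous. The crucial point is that the finite-dimensional $k$-subspace $I_{\degg_i}\subseteq R_{\degg_i}$ is contained in none of the $\mathfrak q_\ell$. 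Granting this, I choose $y_i\in I_{\degg_i}$ lying outside every $\mathfrak q_\ell$; cutting down by $y_i$ then destroys every $i$-dimensional component of $V(y_{i+1}\comdots y_{\dip+1})$, so $\dim R/(y_i\comdots y_{\dip+1})=i-1$ and the induction continues. After $\dip+1$ steps this produces a homogeneous system of parameters $y_1\comdots y_{\dip+1}\in I$ with $\deg y_i=\degg_i$, as required.

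It remains to prove that $I_{\degg_i}\not\subseteq\mathfrak q_\ell$, and this is where the hypotheses that $I$ is $R_+$-primary and that $\degg_1\ge\cdots\ge\degg_\nog$ enter; I expect it to be the main obstacle, since for an arbitrary subset of the generators the analogous non-containment fails. Suppose $I_{\degg_i}\subseteq\mathfrak q$ for some minimal prime $\mathfrak q$ of $(y_{i+1}\comdots y_{\dip+1})$ with $\dim R/\mathfrak q=i\ge1$. Then $R_+\not\subseteq\mathfrak q$, hence $\mathfrak q$ contains no graded component $R_s$ (clear for $s=0$; for $s\ge1$ a containment $R_s\subseteq\mathfrak q$ would force $R_1\subseteq\mathfrak q$ and then $R_+\subseteq\mathfrak q$). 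Now for every $j\ge i$ we have $0\le\degg_i-\degg_j$ and $R_{\degg_i-\degg_j}\,f_j\subseteq I_{\degg_i}\subseteq\mathfrak q$, so primality gives $f_j\in\mathfrak q$. Therefore $I\subseteq(f_1\comdots f_{i-1})+\mathfrak q$, so $R_+=\sqrt I\subseteq\sqrt{(f_1\comdots f_{i-1})+\mathfrak q}$ and thus $\operatorname{ht}\big((f_1\comdots f_{i-1})+\mathfrak q\big)=\dip+1$. By Krull's height theorem this forces $\operatorname{ht}\mathfrak q\ge(\dip+1)-(i-1)$, hence $\dim R/\mathfrak q\le i-1$, contradicting $\dim R/\mathfrak q=i$. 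This contradiction establishes the claim and completes the proof.
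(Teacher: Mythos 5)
Your proof is correct but takes a genuinely different route from the paper's. The paper runs the Koszul complex through the cohomological machinery of Section~\ref{prelim}: from the surjection $\bigoplus_{|S|=\dip+1}\O_Y\bigl(m-\sum_{i\in S}\degg_i\bigr)\twoheadrightarrow\Syz_\dip(m)$ one sees that for $m\ge\sum_{i=1}^{\dip+1}\degg_i$ (the maximal $(\dip+1)$-fold degree sum, by the decreasing order) every twist on the left is nonnegative, so after Frobenius pullback and a further twist by $\ell'\ge\ell$ the $\dip$-th cohomology of the source vanishes by Serre vanishing; this gives $H^\dip\bigl(F^{e*}\Syz_\dip(m)\otimes\O_Y(\ell')\bigr)=0$, and Lemma~\ref{tools-tightclosure}(c) then yields $R_m\subseteq I^*$. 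You instead reduce to the parameter bound \cite[Theorem~2.9]{hunekeparameter} by graded prime avoidance, extracting a homogeneous system of parameters $y_1\comdots y_{\dip+1}\in I$ with $\deg y_i=\degg_i$; the descending degree order is exactly what drives your height contradiction at each step of the induction, and so it also explains transparently why the $\dip+1$ largest degrees are the ones that matter. Both arguments are sound. Yours is shorter and purely ring-theoretic, using the parameter theorem (hence Brian\c con--Skoda) as a black box; the paper's establishes the top-syzygy cohomological vanishing directly in the same framework as its other bounds, which is what it needs for the comparisons made in that section. Two small remarks on your write-up: over the paper's standing hypothesis that $k$ is algebraically closed, the infinite-field reduction is automatic and can be dropped; and the final height estimate is cleanest if carried out entirely inside $R/\mathfrak q$, where Krull's height theorem gives $\operatorname{ht}_{R/\mathfrak q}(\overline{R_+})\le i-1$ against $\operatorname{ht}_{R/\mathfrak q}(\overline{R_+})=\dim R/\mathfrak q=i$ for the graded $k$-domain $R/\mathfrak q$, thereby avoiding any catenarity considerations in $R$ itself.
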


\begin{proof} Consider the Koszul complex
\cite[Chapter 17]{eisenbud}, which is exact on the punctured spectrum. The surjection
$$\bigoplus_{I \subseteq \{1 \comdots \nog \},\, |I|=\dip +1  }\O_Y(m-\sum_{i \in I }\degg_i )
\lra \Syz_\dip(m) \lra 0$$ on
$Y = \Proj R$ yields a surjection for the $\dip$-th cohomology.
For $m \geq   \sum_{i=1}^{\dip +1}  \degg_i$, all twists in the
summands on the left are non-negative.
Now let $\twistell$ be as in Lemma~\ref{tools-tightclosure}. 
Then for
$\twistell' \geq \twistell$, we obtain a surjection
$$H^\dip\big(\bigoplus_{|I|=\dip +1}
\O_Y(q(m-\sum_{i \in I }\degg_i)+ \twistell' )\big)
\lra H^\dip\big(F^{e*}(\Syz_\dip(m)) \otimes \O_Y(\twistell')\big) \lra 0.$$
The term on the left is zero due to the assumption on $\twistell$, so
we obtain that $H^\dip\big(F^{e*}(\Syz_\dip(m)) \otimes
\O_Y(\twistell')\big)=0$. Hence Lemma~\ref{tools-tightclosure}(c)
yields that for primary ideal generators, the maximum
over all degree sums of length $\dip +1$ is a tight closure bound.
\end{proof}

The Koszul complex was used in \cite{brennerlinearbound} to establish a better bound. Under the condition that $\Syz_1$ is strongly semistable, one obtains that
$\Syz_\dip$ is (as, up to a twist, an exterior product of $\Syz_1$)
also strongly semistable, and this gives the following bound.

\begin{proposition}[Bound when $\Syz_1$ is strongly semistable]
Let $f_1 \comdots f_\nog \in R$ be elements of degrees $\degg_1 \geq
\degg_2 \geq \ldots \geq \degg_\nog$ that
define an $R_+$-primary ideal. Suppose that $\Syz_1(f_1 \comdots
f_\nog)$ 
is strongly semistable.  Then
\[ m_0= \lceil \frac{\dip}{\nog-1} (\degg_1 + \ldots + \degg_\nog)
\rceil \] 
is an inclusion bound for the tight closure of $(f_1 \comdots
f_\nog)$. In particular, for constant degree $\degg$, the bound is 
\[ m_0=\lceil \frac{\dip \nog}{\nog-1} \cdot \degg \rceil. \]
\end{proposition}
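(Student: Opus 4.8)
The plan is to run the argument used for the Koszul bound above, replacing the explicit cohomology vanishing there by the semistability input of Proposition~\ref{semistablecoho}: I would exhibit a twist of the top syzygy bundle that is strongly semistable of non-negative degree, feed it into Proposition~\ref{semistablecoho}(a), and conclude with Lemma~\ref{tools-tightclosure}(c). Throughout one works on $Y=\Proj R$ in the setting where Proposition~\ref{semistablecoho} applies (in particular $R$ a normal domain with $Y$ smooth).

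I would take as the complex (\ref{one}) the Koszul complex of $f_1\comdots f_\nog$. Since $I=(f_1\comdots f_\nog)$ is $R_+$-primary, it is exact on the punctured spectrum and hence induces an exact complex of sheaves on $Y$. Comparing, for $j=2\comdots\dip$, the Koszul sequences $0\ra\Syz_j\ra\bigwedge^j F\ra\Syz_{j-1}\ra 0$ (with $F=\bigoplus_i\O_Y(-\degg_i)$) to the canonical two-step filtration $0\ra\bigwedge^j\Syz_1\ra\bigwedge^j F\ra\bigwedge^{j-1}\Syz_1\ra 0$ coming from $0\ra\Syz_1\ra F\ra\O_Y\ra 0$, an induction identifies $\Syz_j=\bigwedge^j\Syz_1$; in particular $\Syz_\dip=\bigwedge^\dip\Syz_1$, which is strongly semistable because $\Syz_1$ is (the point discussed before the proposition, resting on \cite{brennerlinearbound}). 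From $0\ra\Syz_1\ra F\ra\O_Y\ra 0$ one gets $\mu(\Syz_1)=-\frac{1}{\nog-1}(\degg_1\plusdots\degg_\nog)\cdot\deg\O_Y(1)$, and taking the $\dip$-th exterior power multiplies the slope by $\dip$, so
\[\mu(\Syz_\dip)=-\frac{\dip}{\nog-1}(\degg_1\plusdots\degg_\nog)\cdot\deg\O_Y(1).\]

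Now set $m_0=\lceil\frac{\dip}{\nog-1}(\degg_1\plusdots\degg_\nog)\rceil$, so $m_0\geq\frac{\dip}{\nog-1}(\degg_1\plusdots\degg_\nog)$. Twisting by an invertible sheaf preserves strong semistability, so $\Syz_\dip(m_0)$ is strongly semistable of slope $\mu(\Syz_\dip)+m_0\deg\O_Y(1)\geq 0$, hence of non-negative degree. Proposition~\ref{semistablecoho}(a) applied to $\shS=\Syz_\dip(m_0)$ then gives a number $\twistell_0$ with $H^\dip(Y,{F^e}^*(\Syz_\dip(m_0))\otimes\O_Y(\twistell'))=0$ for all $e$ and all $\twistell'\geq\twistell_0$. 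By Lemma~\ref{tools-tightclosure}(c) with $j=\dip$ this forces $R_{m_0}\subseteq I^*$, and since $I^*$ is an ideal and $R$ is standard-graded, $R_{\geq m_0}\subseteq I^*$. For constant degree $\degg_1=\ldots=\degg_\nog=\degg$ this specializes to $m_0=\lceil\frac{\dip\nog}{\nog-1}\degg\rceil$.

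The main obstacle is the inheritance of strong semistability by the exterior power $\bigwedge^\dip\Syz_1$: in characteristic $p$, ordinary semistability is not preserved under tensor or exterior powers, and it is exactly the requirement that all Frobenius pullbacks be semistable that repairs this. I would quote this from \cite{brennerlinearbound} rather than reprove it; the mechanism is that $({F^e}^*\Syz_1)^{\otimes \dip}$ is strongly semistable and surjects, with the same slope, onto ${F^e}^*(\bigwedge^\dip\Syz_1)=\bigwedge^\dip({F^e}^*\Syz_1)$, a slope-preserving quotient of a semistable sheaf being again semistable. A secondary point to verify is that the hypotheses of Lemma~\ref{tools-tightclosure} hold for the Koszul complex, notably that $m_0$ dominates all its Betti twists $\alpha_{j,i}$ with $j\leq\dip$; this is forced by the semistability of $\Syz_1$, since a sufficiently unbalanced degree type would already produce a Koszul sub-line-bundle of $\Syz_1$ of slope exceeding $\mu(\Syz_1)$.
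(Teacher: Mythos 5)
Your reconstruction follows the route the paper itself sketches (the paper states this proposition without proof, citing \cite{brennerlinearbound} together with the exterior-power observation in the preceding paragraph), and the main line of reasoning is sound: identify $\Syz_\dip$ with $\bigwedge^\dip\Syz_1$ via the Koszul complex, inherit strong semistability through exterior powers, compute $\mu(\Syz_\dip)=\frac{\dip}{\nog-1}\cdot(-\sum\degg_i)\cdot\deg\O_Y(1)$, twist so the degree is non-negative, and feed Proposition~\ref{semistablecoho}(a) into Lemma~\ref{tools-tightclosure}(c). You also correctly flag that this requires the ambient hypotheses of Proposition~\ref{semistablecoho} ($R$ a normal domain, $Y$ smooth), which the proposition statement leaves implicit.

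The one real gap is in your ``secondary point.'' To invoke Lemma~\ref{tools-tightclosure} you need $m_0\geq\alpha_{j,i}$ for all $j\leq\dip$, i.e.\ $m_0\geq\degg_1\plusdots\degg_\dip$ (the $\dip$ largest degrees). You claim this is forced because an unbalanced degree type would yield a Koszul sub-line-bundle of $\Syz_1$ of too-large slope. But the Koszul sub-line-bundles of $\Syz_1$ are the $\O_Y(-(\degg_i+\degg_j))$, and semistability against them only gives the \emph{lower} bounds $\degg_i+\degg_j\geq\frac{1}{\nog-1}\sum\degg_k$; for $\nog>\dip+2$ these do not imply $\degg_1\plusdots\degg_\dip\leq\frac{\dip}{\nog-1}\sum\degg_k$. (Try $\dip=2$, $\nog=5$, degrees $(4,4,2,2,2)$: all pair sums are at least $14/4$, yet $\degg_1+\degg_2=8>7$.) The inequality you need is nevertheless a consequence of semistability of $\Syz_1$, just detected by a different subsheaf: the coordinate projection $\Syz_1\hookrightarrow\bigoplus_i\O_Y(-\degg_i)\to\O_Y(-\degg_1)$ is generically surjective (on $D_+(f_k)$ for any $k\neq 1$ one can solve for the remaining entries), so its kernel is a rank-$(\nog-2)$ subsheaf whose slope would exceed $\mu(\Syz_1)$ if $\degg_1>\frac{1}{\nog-1}\sum\degg_k$; equivalently, semistability forces $\mu(\Syz_1)\leq -\degg_1\cdot\deg\O_Y(1)$, giving $\degg_1\leq\frac{1}{\nog-1}\sum\degg_k$, hence $\degg_1\plusdots\degg_\dip\leq\dip\,\degg_1\leq\frac{\dip}{\nog-1}\sum\degg_k\leq m_0$. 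With that replacement the argument is complete.
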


In ring dimension two (i.e. when $\dip =1$), this bound coincides with the generic
bound from Theorem \ref{maintheorem}, but in higher dimensions the
new bound coming from generic ideal inclusion in the polynomial ring
is much better (mainly because the Koszul complex is not minimal in
general).

\begin{example}
Let $f_1 \comdots f_\nog$ have degree $\degg=10$. We state the bounds
for the containment in $I^*$ depending on $\dip$ and $\nog$. Of
course, the conditions for the bounds to hold differ for each bound.
In the last column of each table, we state the limit of the bounds
for
$\nog \ra \infty$. Let $m_0$ be the generic ideal inclusion bound in the
polynomial ring.

\medskip

$\dip=1$:

\smallskip

\begin{tabular}{l|l|llllllll|l}
$\nog$ & &2 & 3 & 4 & 5 &6&7&10& 11 & $\infty$ \\
\hline
Koszul bound & $m=2 \cdot \degg$ & 20 & 20 &  20 &  20 & 20 & 20 & 20 &20 & 20\\
$\Syz_1$ str. semist.& $m = \lceil \frac{\nog}{\nog-1} \degg \rceil$
& 20 & 15 & 14 & 13 &12   &12 &12 & 11 & 11 \\
Generic bound& $m=m_0+1$
& 20 & 15 & 14 & 13&12 &12  & 12 &  11 & 11
\end{tabular}

\medskip

$\dip=2$:

\smallskip
\begin{tabular}{l|l|llllllll|l}
$\nog$ & &3 & 4 & 5 & 6 & 7& 8 & 10 & 11 &$\infty$ \\
\hline
Koszul bound & $m=3 \cdot \degg$ & 30 & 30 & 30 & 30 & 30 & 30 & 30 & 30 & 30 \\
$\Syz_1$ str. semist. & $m = \lceil \frac{2\nog}{\nog-1} \degg \rceil$
& 30 & 27 & 25 & 24 & 24& 23 &23 &22& 21 \\
Generic bound & $m=m_0+2$ & 30 & 21 & 19  & 18 & 17 & 16 & 16 & 15 &12
\end{tabular}

\medskip

$\dip=3$:

\smallskip
\begin{tabular}{l|l|llllllll|l}
 & $\nog$ &4 & 5 & 6 & 7 & 8 & 9 & 10 & 11 & $\infty$ \\
\hline
Koszul bound & $m=4 \cdot \degg$ & 40 & 40 & 40 & 40 & 40 & 40 & 40 &
40 & 40 \\
$\Syz_1$ str. semist.& $m = \lceil \frac{3\nog}{\nog-1} \degg \rceil$
& 40 & 38 & 36 & 35 & 35 & 34 & 34 & 33 & 31 \\
Generic bound & $m=m_0+3$ & 40 & 26 & 24  & 22 & 22 & 21 & 20 & 20 & 13 
\end{tabular}
\end{example}

\begin{remark}
Let $\dip=3$. If $\nog \geq 6$, then
we do not have a closed formula for the first zero of the Fr\"oberg
function and we do not know whether the Fr\"oberg conjecture holds. 
However, we can do a computational check of the Fr\"oberg conjecture
and compute the first zero of $F^+(m)$ \emph{for specific
examples}. For the above example, we computed the Hilbert
function of $\nog$ randomized ideal generators, then the Fr\"oberg
function for $n$ generators of degree $10$, checked that the values agree
and in what degree they first become non-positive. The search for a zero of the Fr\"oberg function does not really
require the functionality of \emph{CoCoA} and has independently been
realized in \emph{Lisp}.
\end{remark}

\begin{remark} 
[Asymptotic behavior for $\nog \ra \infty$]
If the degree $\degg$ is fixed and $\nog$ grows as large as the
number of monomials in $\dip +1$ variables of degree $\degg$, then the
Fr\"oberg conjecture holds for trivial reasons with $m_0 =\degg $.
Therefore the asymptotic limit of our generic bound is $\degg + \dip$
(the true limit, even for ideal inclusion in $R$, is of course also $m_0$,
but this is obtained for much larger $\nog$ which also depends on $R$).
\end{remark}

\begin{remark} \label{asymptotic-large-degree}
[Asymptotic behavior for $\degg \ra \infty$]
Let $\nog \geq \dip +1$ be fixed, and let $\degg \ra \infty$.
The Koszul bound is just $(\dip+1) \cdot \degg$. The bound which holds
for strongly semistable first syzygy bundles behaves asymptotically
like
$\frac{\dip \nog}{\nog-1} \cdot \degg$.

Our new bound behaves for $\dip=2$ like
$\frac{\nog+\sqrt{\nog}}{\nog-1} \cdot \degg$, so it is considerably lower in
the limit (the coefficent can also be written as
$\frac{\sqrt{\nog}}{\sqrt{\nog} -1}$).

For $\dip=3$ we do not have a closed formula for the
bound coming from the Fr\"oberg function, but
\emph{Lisp} computations suggest that if $\nog$ is a cube,
i.e. if $\nog=\twistell^3$ for some $\twistell \in \NN$, then the
smallest positive zero of
$F^+(m)$ behaves like $\frac{\twistell}{\twistell-1} \cdot \degg$ for $\degg \gg 0$.
So assuming that the Fr\"oberg conjecture holds, we are led to
conjecture that the generic (ideal and) tight closure bound behaves
like $\frac{\twistell}{\twistell-1}\cdot \degg$ for
$\nog=\twistell^3$. However, we have so far been unable to find an
algebraic expression when $\nog$ is not a cube.

We have also computed the zeros of the
Fr\"oberg function for $\dip=4$ and $\nog=\twistell^4$ with $\twistell \in \NN$. For
small $\twistell$, we obtained again that it behaves like
$\frac{\twistell}{\twistell-1}\cdot \degg$.
\end{remark}

\bibliographystyle{amsplain}

\bibliography{bibliothek}

\end{document}